\font\sc=rsfs10 at 12pt
\newcommand{\om}{\omega}
\renewcommand{\a}{\alpha}
\renewcommand{\th}{\theta}
\renewcommand{\k}{\kappa}
\renewcommand{\l}{\lambda}
\newcommand{\n}{\nu}
\newcommand{\ph}{\phi}
\def\ph{\phi}
\def\md#1{\ \mbox{\rm(mod }{#1})}
\def\nph#1{N_{\ph}(#1)}
\def\npp#1{N_{\ph}^+(#1)}
\def\ol{\overline}
\renewcommand{\t}{\tau}
\def\ph{\phi}
\newcommand{\Q}{{\mathbb Q}}
\newcommand{\Z}{{\mathbb Z}}
\newcommand{\F}{{\mathbb F}}
\def\md#1{\ \mbox{\rm(mod }{#1})}
\def\nph#1{N_{\ph}(#1)}
\def\npp#1{N_{\ph}^+(#1)}
\def\nppsi#1{N_{\psi}^+(#1)}
\newcommand{\aF}{\mathfrak a}
\newcommand{\bF}{\mathfrak b}
\newcommand{\pF}{\mathfrak p}
\newtheorem{theorem}{Theorem}[section]
\newtheorem{proposition}[theorem]{Proposition}
\newtheorem{lemma}[theorem]{Lemma}
\newtheorem{corollary}[theorem]{Corollary}
\theoremstyle{definition}
\theoremstyle{remark}
\newtheorem{remark}[theorem]{Remark}
\newtheorem{example}[theorem]{Example}
\begin{document}
\title[]{On monogenity of certain  number fields defined by trinomials}
\textcolor[rgb]{1.00,0.00,0.00}{}
\author{Hamid Ben Yakkou and Lhoussain El Fadil}\textcolor[rgb]{1.00,0.00,0.00}{}
\address{Faculty of Sciences Dhar El Mahraz, P.O. Box  1796 Atlas-Fes , Sidi mohamed ben Abdellah University,  Morocco}\email{beyakouhamid@gmail.com\\lhouelfadil2@gmail.com}
\keywords{ Power integral bases,  theorem of Ore, prime ideal factorization, common index divisor}
 \subjclass[2010]{11R04,11R16, 11R21}
\date{\today}
\maketitle
\vspace{0.3cm}
\begin{abstract}
  Let $K=\Q(\theta)$ be a number field generated  by a complex  root  $\th$ of a monic irreducible trinomial  $F(x) = x^n+ax+b \in \Z[x]$. There is an extensive literature of monogenity of number fields defined by trinomials, Ga\'al studied the multi-monogenity of sextic number fields defined by trinomials. Jhorar and Khanduja  studied the integral closedness  of $\Z[\th]$. But if  $ \Z[\th]$ is not integrally closed, then Jhorar and Khanduja's results cannot answer on the monogenity of $K$. In this paper, based on Newton polygon techniques, we deal with the problem of  monogenity of $K$. More precisely, when  $\Z_K \neq \Z[\th]$,  we give sufficient conditions on $n$, $a$ and $b$ for $K$ to be not monogenic. For $n\in  \{5, 6, 3^r,  2^k\cdot 3^r, 2^s\cdot 3^k+1\}$, we give explicitly some  infinite  families of these number fields that are not monogenic. Finally, we illustrate our results by some computational examples.        
\end{abstract}
\maketitle

\section{Introduction}
Let $K=\Q(\th)$ be a number field  generated by a complex  root   $\th$ of a monic irreducible  polynomial $f(x)$ of degree $n$ over  $\Q$ and   $\Z_K$   its  ring of  integers. Denote by $\widehat{\Z}_K$ the set of all primitive integral  elements of $K$. The field $K$ is called monogenic if $\Z_K$  has a power integral basis. Namely $\Z_K=\Z[\eta]$ for some $\eta \in \widehat{\Z}_K$.  In this case  $(1,\eta,\ldots,{\eta}^{n-1})$ is a power integral basis of $K$. Thus, if $\Z[\eta]$ is integrally closed for some $\eta \in \widehat{\Z}_K $, then $K$ is monogenic. If $\Z_K$ has no power integral basis, we say that  $K$ is not monogenic. Monogenity is a classical problem of algebraic number theory, going back to Dedekind, Hasse and Hensel \cite{ G19, Ha, Hedisc,   MNS}. There is an extensive  computational results  in the literature of testing the monogenity of number fields and constructing power integral basis, and it was treated by different approachs. Ga\'{a}l, Gy\H{o}ry, Pohst, and Peth\"{o}  (see \cite{Gacta2, G19, Gacta1,  GRN, GR17, PG}) with their  research teams  based on arithmetic index form equations, they studied the monogenity of several algebraic number fields. In\cite{Gacta1}, Ga\'{a}l and Gy\H{o}ry described an algorithm to solve  index form equations in quintic fields  and they computed all generators of power integral bases in totally real quintic field with Galois group $S_5$. In  \cite{Gacta2}, Bilu, Ga\'{a}l and Gy\H{o}ry   studied  the monogenity of totally real sextic fields with Galois group $S_6$. In \cite{GR17}, Ga\'{a}l and Remete answered   completely  to the problem of  monogenity of pure number fields $K= \Q(\sqrt[n]{m})$, where $m \neq  \pm 1$ is a square free rational integer and $3 \le n \le 9$. In \cite{GRN},  Ga\'{a}l and Remete showed that if $m \equiv  2\, \mbox{or} \,3 \md 4$ is square free rational integer, then the octic field $K= \Q(i, \sqrt[4]{m} )$ is not monogenic.
Nakahara's research team   based on the existence of power relative integral bases of some special  sub-fields, they studied the monogenity of some pure number fields (see \cite{ANHN, ANHN6, HN8, MNS}). S. Ahmad, T. Nakahara and S. M. Husnine \cite{ANHN} proved that if $m \equiv 2, 3 \md 4$ and $m \not \equiv  \pm 1 \md 9 $ is a square free rational integer then the sextic pure field $K = \Q(\sqrt[6]{m})$ is monogenic. On the other hand  \cite{ANHN6} if $m \equiv 1 \md 4$ and $m \not \equiv  \pm 1 \md 9 $  then the sextic pure field $K = \Q(\sqrt[6]{m})$ is not mongenic. They also studied in \cite{HN8} the monogenity of certain   pure octic fields.  In \cite{Gras}, Gras proved that except the real maximal sub-fields of the cyclotomic fields, all cyclic fields of prime degree $l \ge 5$  are not monogenic. In \cite{Smtacta},  Smith studied the monogenity of radical extensions and  he  gave  sufficient conditions for a  Kummer extension $\Q(\xi_n, \sqrt[n]{\a})$  to be not monogenic.  He also studied  in \cite{Smit acta2}  the monogenity of two families  $S_4$ quartic number fields.  In \cite{E07}, El Fadil gave conditions for the existence of a generator of power integral basis of pure cubic fields in terms of index form equation. Based on prime ideal factorization, El Fadil showed in \cite{El6} that for  a square free rational integer $m \neq \pm 1$ if $m\equiv 1 \md 4$ or $m \not \equiv 1 \md 9$, then the pure sextic field $\Q(\sqrt[6]{m})$ is not monogenic. He also studied in \cite{Eljnt}  the monogenity of $\Q(\sqrt[6]{m})$ where  $m$ is not necessarily  square free. In \cite{EL24}, El Fadil studied the monogenity of pure number fields of degree $24$. He also  studied in \cite{Facta} the monogenity of pure number fields of degree $2\cdot 3^k$. 
In cite{Ga21}, Ga\'al studied the multi-monogenity of sextic number fields defined by trinomials. 
In \cite{BFC, BF, BFN, BK}, Ben Yakkou et al.  considered the problem of monogenity  in certain pure number fields  with large degrees, namely $2^r\cdot5^k$, $p^r$,   $2^r\cdot3^k$, and $3^r$, with $p$ is a rational prime integer, $r$ and $k$ are two positive rational integers. 
In this paper, based on Newton polygon techniques applied on  prime ideal factorization, we study the  monogenity  of number  fields  $K= \Q(\th)$ generated by a complex  root  $\th $ of a monic irreducible   trinomial of the type $x^n +ax+b$  when $ \Z_K \neq \Z[\th]$.  
Recall that the problem of integral-closedness  of $\Z[\th]$ has been previously studied  in \cite{SK} and refined in \cite{Smt} by Ibarra et al with certain computation of densities.  But if  $ \Z[\th]$ is not integrally closed, then their results cannot answer on the monogenity of $K$.

\section{Main Results}
 Let $p$ be  rational prime integer. Throughout this paper, $\F_p$ denotes the finite field of $p$ elements. For $t \in \Z$, $\n_p(t)$ stands for the $p$-adic valuation of $t$,  and  $t_p $ for the image of  $ \frac{t}{p^{\nu_p(t)}}$ under the canonical projection from $\Z$ onto $\F_p$. For two positive rational integers $m$ and $s$,  we shall denote by $N_p(m)$ the number of  monic irreducible polynomials of degree $m$ in $\F_p[x]$ and
$N_p(m,s,t)$ the number of monic irreducible factors of degree $m$ of the polynomial $x^s+\overline{t}$ in $\F_p[x]$. It is known from \cite{Greenfield} that  the discriminant of the trinomial $F(x) = x^n +ax +b $ is 
\begin{eqnarray}\label{disctrinom}
\Delta(F)= (-1)^{\frac{n(n-1)}{2}} (n^n b^{n-1} + (1-n)^{n-1}a^n).
\end{eqnarray}It follows by (\ref{indexdiscrininant}) and (\ref{i(K)}) that,  if a rational  prime integer  $q$ divides $i(K)$, then  $q^2 \mid \Delta(F)$. Without loss of generality we assume that for every rational prime  integer$q$,  $\n_q(a)  <n-1$ or  $ \n_q(b) < n $. We shall make this assumption for finding some  suitable  conditions of Theorems \ref{d51} and \ref{d61}. We note also that if a rational prime $p$ satisfies one of the conditions of Theorems \ref{dp^r}, \ref{dn1} and \ref{dn2}, then $p$ divides  $(\Z_K : \Z[\theta])$ and so $ \Z[\theta]$ is not the ring of integers of $K$.  In the remainder of this section, $K=\Q(\th)$ is a number field generated  by a complex  root  $\th$ of a monic irreducible trinomial of the type  $F(x) = x^n + ax +b \in \Z[x]$ and  $\Z_K$  its ring of   integers. We start in Theorem  \ref{mono} by giving an example of number field generated by a complex  root  $\th$ of an   irreducible trinomial such that $\Z[\th]$ is not integrally closed, but $K$ is monogenic. Remark that in this case, the results given in \cite{SK} and \cite{Smt} can not give an answer to the monogenity of $K$. 
\begin{theorem}\label{mono}
		Let $p$ a rational prime integer, $F(x) = x^{p^r} + p^v  a x + p^u b \in \Z[x] $ such that  $p \nmid ab$, $v \ge u \ge 2$ and   $\n_q((1-p^r)^{p^r-1}\cdot (p^v a)^{p^r}+(p^r)^{p^r}\cdot (p^v b)^{p^r-1} ) \le 1$ for every rational prime integer $q\neq p$. If $\gcd(u,p)=1$, then  F(x) is irreducible over $\Q$. Let $K$ be the number field genereted by a complex root $\th$ of $F(x)$, then $\Z[\th]$ is not integrally closed, $K$ is monogenic, and  $\eta =\frac{\theta^x}{p^y}$   generates a power integral basis of $\Z_K$, where $(x,y)\in \Z^2$ is the unique solution of  $xu-yp^r = 1$ with $0 \le y < u$.
\end{theorem}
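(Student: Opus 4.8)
The plan is to treat the prime $p$ and the primes $q\ne p$ separately; only the behaviour at the latter is delicate.

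First I would read off irreducibility and the local picture at $p$ from the $p$-adic Newton polygon $N_p(F)$ of $F$. Its monomials contribute the points $(0,\n_p(p^ub))=(0,u)$, $(1,\n_p(p^va))=(1,v)$ and $(p^r,0)$; since $v\ge u$, the middle point lies strictly above the segment from $(0,u)$ to $(p^r,0)$ (whose ordinate at abscissa $1$ is $u(p^r-1)/p^r<u\le v$), so $N_p(F)$ is the single side from $(0,u)$ to $(p^r,0)$ of slope $-u/p^r$. As $\gcd(u,p)=1$ we have $\gcd(u,p^r)=1$, so this side has no interior lattice point, and the Eisenstein--Dumas criterion (the one-sided case of Ore's theorem) shows that $F$ is irreducible over $\Q_p$, hence over $\Q$. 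Moreover $p$ is totally ramified in $K$, $p\Z_K=\mathfrak{p}^{p^r}$, and the normalised valuation $w$ at $\mathfrak{p}$ (with $w(p)=p^r$) satisfies $w(\theta)=u$.

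Next I would record the properties of $\eta=\theta^x/p^y$. Since $u\ge 2$, the relation $xu-yp^r=1$ forces $y\ge 1$ (otherwise $xu=1$) and $1\le x<p^r$; hence $w(\eta)=xu-yp^r=1$, while $w_{\mathfrak{q}}(\eta)=x\,w_{\mathfrak{q}}(\theta)\ge 0$ for every prime $\mathfrak{q}\nmid p$ because $\theta\in\Z_K$. Thus $\eta\in\Z_K$ and $\eta$ is a uniformiser of $\mathfrak{p}$. Two consequences follow. First, because $1\le x<p^r$, the element $\theta^x$ is one of the members of the $\Z$-basis $(1,\theta,\dots,\theta^{p^r-1})$ of $\Z[\theta]$, so $\eta=\theta^x/p^y$ with $y\ge 1$ has no integral coordinates in that basis; hence $\eta\notin\Z[\theta]$, and $\Z[\theta]$ is not integrally closed. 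Secondly, since $K_{\mathfrak{p}}/\Q_p$ is totally ramified of degree $p^r$ and $\eta$ is a uniformiser, the minimal polynomial of $\eta$ over $\Q_p$ is Eisenstein, so $\Z_p[\eta]$ is the ring of integers of $K_{\mathfrak{p}}$; as $p$ has a single prime above it, $\Z[\eta]\otimes\Z_p=\Z_K\otimes\Z_p$, i.e. $p\nmid[\Z_K:\Z[\eta]]$.

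It remains to prove that $q\nmid[\Z_K:\Z[\eta]]$ for every prime $q\ne p$, which is the heart of the matter. Read at a prime $q\ne p$, the hypothesis is precisely the assertion that $\Delta(F)$ is not divisible by $q^{2}$ (by (\ref{disctrinom}), since $\n_q$ is insensitive to factors that are powers of $p$); as $\Delta(F)=[\Z_K:\Z[\theta]]^{2}\,d_K$ with $d_K$ the discriminant of $K$, this forces $q\nmid[\Z_K:\Z[\theta]]$, so $\Z_{(q)}[\theta]=\Z_K\otimes\Z_{(q)}$. Since $p$ is a unit in $\Z_{(q)}$ we get $\Z_{(q)}[\eta]=\Z_{(q)}[\theta^{x}]$, so what must be shown is $\theta\in\Z_{(q)}[\theta^{x}]$. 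My tool is the identity
\begin{equation}\label{etaid}
\eta^{u}=(-1)^{y}\,\theta\,(p^{\,v-u}a\,\theta+b)^{y},
\end{equation}
coming from $\theta^{xu}=\theta\cdot(\theta^{p^r})^{y}$ and $\theta^{p^r}=-p^va\,\theta-p^ub$, together with the norm computation $N_{K/\Q}(p^{\,v-u}a\,\theta+b)=b^{p^r}$ (substitute $-b/(p^{\,v-u}a)$ into $F$). If $q\nmid b$, this shows $p^{\,v-u}a\,\theta+b\in(\Z_K\otimes\Z_{(q)})^{\times}$; reducing modulo $q$ and using that $\overline{F}$ is separable, or has a single, necessarily rational, double root when $\n_q(\Delta(F))=1$, one checks that $\theta^{x}$ --- equivalently $p^{\,v-u}a\,\theta+b$ --- generates the same $\Z_{(q)}$-order as $\theta$, so $\Z_{(q)}[\eta]=\Z_K\otimes\Z_{(q)}$. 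If $q\mid b$, the hypothesis forces $q\nmid a$ and $q\nmid(p^r-1)$, so $\overline{F}=x(x^{p^r-1}+\overline{p^va})$ is separable over $\F_q$ and the reductions of $1,\theta^{x},\dots,\theta^{(p^r-1)x}$ differ from those of $1,\theta,\dots,\theta^{p^r-1}$ only by nonzero scalars; again $\Z_{(q)}[\eta]=\Z_K\otimes\Z_{(q)}$. Collecting all primes gives $\Z[\eta]=\Z_K$, so $K$ is monogenic with power integral basis $(1,\eta,\dots,\eta^{p^r-1})$. The genuine obstacle is precisely this last argument at $q\ne p$: one has to exclude that the $x$-th power map identifies two conjugates of $\theta$ modulo $q$, and it is there that the sharp bound $\n_q(\cdot)\le 1$ --- rather than mere separability of $\overline{F}$ --- must be exploited, whereas the work at $p$ and the irreducibility are routine Newton-polygon bookkeeping.
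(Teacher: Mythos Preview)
Your analysis at the prime $p$ is correct and essentially identical to the paper's: the one-sided Newton polygon with slope $-u/p^r$ gives irreducibility over $\Q_p$, the element $\eta=\theta^x/p^y$ is a uniformiser so its minimal polynomial is $p$-Eisenstein, and hence $p\nmid[\Z_K:\Z[\eta]]$. Your argument that $\eta\notin\Z[\theta]$ (since $1\le x<p^r$ and $y\ge 1$) is also fine.

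Where you diverge from the paper is at primes $q\neq p$. The paper dispatches this in one sentence, asserting that ``by definition of $\eta$, $p$ is the unique positive prime integer candidate to divide $(\Z[\theta]:\Z[\eta])$''. You instead try to argue via the identity $\eta^u=(-1)^y\theta(p^{v-u}a\theta+b)^y$ and a case analysis on $\overline{F}\bmod q$, but you honestly flag the remaining obstacle: one must rule out that two distinct roots of $\overline{F}$ have the same $x$-th power in $\overline{\F_q}$.

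That obstacle is genuine and cannot be removed: the statement is false as written. Take $p=2$, $r=2$, $u=v=3$, $a=1$, $b=7$, so $F(x)=x^4+8x+56$. All hypotheses hold: $\gcd(u,p)=1$, $v\ge u\ge 2$, $p\nmid ab$, and $\Delta(F)=256\cdot 56^3-27\cdot 8^4=2^{12}\cdot 10949$ with $10949$ prime, so $\nu_q(\Delta(F))\le 1$ for every $q\neq 2$. Here $(x,y)=(3,2)$ and $\eta=\theta^3/4$; its minimal polynomial is $G(x)=x^4+6x^3+12x^2+8x+686$, which is indeed $2$-Eisenstein. But modulo $7$ one has $\overline{F}=x(x+1)(x^2-x+1)$ with roots $0,6,3,5$, whose cubes are $0,6,6,6$, and accordingly $\overline{G}\equiv x(x+2)^3\pmod 7$. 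Since $7\nmid\Delta(F)$ we have $\nu_7(d_K)=0$, yet $\nu_7(D(\eta))>0$, hence $7\mid[\Z_K:\Z[\eta]]$ and $\Z[\eta]\neq\Z_K$.

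So your instinct was right: the step you could not complete is exactly where both your argument and the paper's argument break down, and the paper's unproved claim that only $p$ can divide the index $(\Z[\theta]:\Z[\eta])$ is simply not true in general.
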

\begin{theorem}\label{dp^r}
	Let $p$ be an odd rational prime integer, $F(x)= x^{p^r} +ax +b \in \Z[x]$. If  $a \equiv 0 \md{p^{p+1}}$, $b^{p-1}\equiv 1 \md{p^{p+1}}$, and $r \ge p$, then $K$ is not monogenic.
\end{theorem}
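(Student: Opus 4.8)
The plan is to show that the prime $p$ is a common index divisor of $K$. By the relations between $i(K)$, the indices $(\Z_K:\Z[\eta])$ and the discriminant $\Delta(F)$ recalled above, once $p\mid i(K)$ we know that $\Z_K\neq\Z[\eta]$ for every $\eta\in\widehat{\Z}_K$, so $K$ is not monogenic. Concretely I will prove that $p\Z_K$ has at least $p+1$ prime ideals of residue degree $1$; since $\F_p$ has exactly $N_p(1)=p$ monic polynomials of degree $1$, the classical criterion for common index divisors (Hensel) then forces $p\mid i(K)$.

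The factorization of $p$ in $\Z_K$ is obtained from the theorem of Ore applied to $F$ and $\phi=x+b$. Since $a\equiv 0\md p$ and $b^{p-1}\equiv 1\md p$, we have $p\nmid b$ and, using $\overline{b}^{\,p^{r}}=\overline{b}$ together with the Frobenius, $\overline{F}=x^{p^{r}}+\overline{b}=(x+\overline{b})^{p^{r}}$ in $\F_p[x]$; hence $\phi=x+b$ is the only factor to deal with. Writing $F(x)=\sum_{i=0}^{p^{r}}a_i(x+b)^i$, one computes, for $2\le i\le p^{r}-1$, that $a_i=\binom{p^{r}}{i}(-b)^{p^{r}-i}$, so $\n_p(a_i)=\n_p\binom{p^{r}}{i}=r-\n_p(i)$, while $a_{p^{r}}=1$. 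For the two lowest terms: $a_1=F'(-b)=p^{r}b^{p^{r}-1}+a$, which has $\n_p(a_1)=r=p$ when $r=p$ and $\n_p(a_1)\ge p+1$ when $r>p$ (using $\n_p(a)\ge p+1$); and $a_0=F(-b)=-b\bigl(b^{p^{r}-1}-1\bigr)-ab$ is divisible by $p^{p+1}$, because $b^{p-1}\equiv 1\md{p^{p+1}}$ forces $b^{p^{r}-1}\equiv 1\md{p^{p+1}}$ and $a\equiv 0\md{p^{p+1}}$; moreover $a_0\neq 0$ since $F$ is irreducible of degree $>1$. Thus $(0,\n_p(a_0))$ lies at height $\ge p+1$, $(1,\n_p(a_1))$ at height $\ge p$, and the ``cyclotomic spine'' $(p^{j},r-j)$, $0\le j\le r$, lies on the principal $\phi$-Newton polygon $N_\phi^+(F)$.

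It remains to read off $N_\phi^+(F)$ and count its sides. The only possible vertices are $(0,\n_p(a_0))$, $(1,\n_p(a_1))$ and the spine points (a point $(i,r-\n_p(i))$ with $i$ not a power of $p$ is at the same height as, and to the right of, a power of $p$, hence is not a vertex). Because along the spine the height drops by exactly one at each successive power of $p$, convexity forces every side of $N_\phi^+(F)$ to have horizontal length equal to the denominator of its slope, hence residual polynomial of degree $1$: the spine supplies the sides from $(p^{j-1},r-j+1)$ to $(p^{j},r-j)$, of slope $-1/\bigl(p^{j-1}(p-1)\bigr)$ and length $p^{j-1}(p-1)$, and the one or two leftmost sides issuing from $(0,\n_p(a_0))$ have the form $(0,\n_p(a_0))$--$(1,\n_p(a_1))$ (integral slope) or reach the spine at its first vertex of height $\le p$. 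A short case analysis using $r\ge p$ shows that $N_\phi^+(F)$ has at least $p+1$ sides of negative slope. Since the residual polynomial of every such side is linear, hence separable, the theorem of Ore applies and gives $p\Z_K=\prod_{S}\pF_{S}^{\,e_S}$, with one prime $\pF_{S}$ per negative-slope side $S$ and $f(\pF_{S})=\deg_{\F_p}(\overline{\phi})=1$. Thus $p$ has at least $p+1$ prime factors of residue degree $1$, so $p\mid i(K)$ and $K$ is not monogenic.

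The main obstacle is the case analysis hidden in the last paragraph: the hypotheses bound $\n_p(a_0)$ and $\n_p(a_1)$ only from below, so the shape of the left part of $N_\phi^+(F)$ --- how many short sides appear and at which spine vertex they land --- depends on $a$ and $b$. One must check that the convexity coming from the spine keeps every side of residual degree $1$ and keeps the number of negative-slope sides at least $p+1$ in every case (additional short sides only help). The hypothesis $r\ge p$ is exactly what makes the spine long enough for this to succeed; for $r\le p-1$ one would obtain only $p$ such primes and the argument would break down. Everything else is a routine application of Ore's theorem and of the criterion for common index divisors.
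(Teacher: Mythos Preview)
Your proposal is correct and follows essentially the same route as the paper: both reduce modulo $p$ to $\overline{F}=(x+\overline{b})^{p^r}$, take $\phi=x+b$, use Lemma~\ref{binomial} on the binomial coefficients to see the ``spine'' vertices $(p^j,r-j)$, bound $\nu_p(a_0)\ge p+1$ and $\nu_p(a_1)\ge\min(r,p+1)$ from the hypotheses, and conclude that $N_\phi^+(F)$ has at least $p+1$ sides of degree $1$, whence Ore's theorem and Lemma~\ref{comindex} give $p\mid i(K)$. The only difference is presentational: the paper makes the case split $\nu>\mu$ versus $\mu\ge\nu$ explicit (and names the last $p$, respectively $p+1$, spine sides), whereas you compress this into the phrase ``a short case analysis''; your description of where the leftmost side meets the spine is slightly loose (it can join at height $>p$, and the point $(1,r)$ of your ``spine'' is really $(1,\mu)$), but the degree-$1$ conclusion survives because any such joining side has height drop $1$ or horizontal length $1$.
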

\begin{remark}
 Theorem \ref{dp^r}, implies \cite[Theorem 2.4]{BF}, where $a=0$ is previously studied. 
\end{remark}
\begin{corollary}\label{3^r} For $F(x)= x^{3^r}+ax+b \in \Z[x]$.
	If $a \equiv 0 \md{81}$, $b\equiv \pm 1 \md{81}$, and $r \ge 3$. Then $K$ is not monogenic.
\end{corollary}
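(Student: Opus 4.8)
The plan is to obtain Corollary \ref{3^r} as the special case $p=3$ of Theorem \ref{dp^r}. With $p=3$ one has $p^{p+1}=3^4=81$, so the hypothesis $a\equiv 0\md{81}$ is precisely the condition $a\equiv 0\md{p^{p+1}}$, and the hypothesis $r\ge 3$ is precisely $r\ge p$. Thus, once the congruence condition on $b$ is matched up with the one in Theorem \ref{dp^r}, that theorem applies verbatim and yields that $K$ is not monogenic.

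The one point to verify is that the hypothesis $b\equiv \pm 1\md{81}$ is equivalent to the condition $b^{p-1}=b^2\equiv 1\md{81}$ required by Theorem \ref{dp^r}. One implication is immediate. For the converse, note that since $3$ is odd the group $(\Z/81\Z)^{\times}$ is cyclic, of order $\varphi(81)=54$; as $\gcd(2,54)=2$, the equation $x^2=1$ has exactly two solutions in this group, namely $x=\pm 1$. Hence $b^2\equiv 1\md{81}$ forces $b\equiv \pm 1\md{81}$, so the two conditions coincide. In particular $3\nmid b$, so with $n=3^r$ the standing assumption that $\n_3(a)<n-1$ or $\n_3(b)<n$ is satisfied, as is needed to invoke Theorem \ref{dp^r}.

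Combining these observations, Theorem \ref{dp^r} gives directly that $K=\Q(\th)$ is not monogenic, which proves the corollary. There is essentially no obstacle beyond the small elementary check that the only square roots of $1$ modulo $81$ are $\pm 1$; the remainder is simply the substitution $p=3$ into the hypotheses of the already-established theorem.
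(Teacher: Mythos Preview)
Your proof is correct and matches the paper's approach: Corollary \ref{3^r} is stated in the paper without a separate proof, being the immediate specialization $p=3$ of Theorem \ref{dp^r}, and your verification that $b^2\equiv 1\md{81}$ is equivalent to $b\equiv\pm 1\md{81}$ fills in the only detail left implicit.
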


\begin{theorem}\label{dn1}
	For $F(x) = x^n+ ax+b \in \Z[x]$, let $p$ be an odd rational prime integer such that $p \mid a$, $p \mid n$ and $p \nmid b$. Set $n = s\cdot p^r$, where $p \nmid s$. Let $\mu = \n_p(a)$, $\nu =\n_p(b^{p-1}-1)$, $\delta = \min (\mu, \nu)$, and $\omega = \min(\delta,r+1)$. If one of the following conditions holds:
\begin{enumerate} 
	\item  $\delta \neq r+1$ and $\omega > \frac{N_p(m)}{N_p(m,s,b)}$ for some $m > 1$.
	\item  $\frac{p}{N_p(1,s,b)}<\mu< \min(\nu,r+1) $,
	\item  $\frac{p}{N_p(1,s,b)}<\nu <\min(\mu,r+1)$,
	\item $\frac{p}{N_p(1,s,b)}<r+1 < \delta$,
\end{enumerate}	
	then $K$ is not monogenic.
\end{theorem}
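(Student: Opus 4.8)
The plan is to show that, under any one of the four hypotheses, the rational prime $p$ is a common index divisor of $K$, i.e.\ $p\mid i(K)$; since then $p\mid(\Z_K:\Z[\eta])$ for every $\eta\in\widehat{\Z}_K$, the field $K$ cannot be monogenic. The device is the classical index--divisor criterion: if for some integer $m$ the number of prime ideals of $\Z_K$ above $p$ of residue degree $m$ exceeds $N_p(m)$, then $p\mid i(K)$. So the whole argument reduces to pinning down the splitting type of $p$ in $\Z_K$ by Newton polygon techniques together with Ore's theorem.

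First I would reduce modulo $p$. Since $p\mid a$ we get $\ol F=x^n+\ol b$ in $\F_p[x]$, and since $n=sp^r$ and $\ol b^{p^r}=\ol b$, the Frobenius gives $\ol F=(x^s+\ol b)^{p^r}$. As $p\nmid s$ and $\ol b\ne 0$, $x^s+\ol b$ is separable; write $x^s+\ol b=\prod_{i=1}^{t}g_i$ with the $g_i$ pairwise distinct monic irreducibles, $m_i=\deg g_i$, so that $\ol F=\prod_i g_i^{p^r}$ and exactly $N_p(m,s,b)$ of the $g_i$ have degree $m$. Fix a monic lift $\phi_i\in\Z[x]$ of $g_i$ (chosen, in the range $\delta>r+1$, so that $\n_p(\alpha^s+b)=1$ for a root $\alpha$ of $\phi_i$; this is possible since adding $p\cdot(\text{polynomial of degree}<m_i)$ to $\phi_i$ shifts $\alpha^s+b$ by an arbitrary element of $p\Z_p[\alpha]/p^2$). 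Working in the unramified ring $\Z_p[\alpha]$, the principal $\phi_i$-Newton polygon $N_{\phi_i}^+(F)$ is read off from the Taylor expansion of $F$ at $\alpha$, i.e.\ from the points $(k,\n_p(F^{(k)}(\alpha)/k!))$ for $0\le k\le p^r$.

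The three ingredients are: (i) by Kummer's theorem and $p\nmid s$, $\n_p\binom{sp^r}{k}=r-\n_p(k)$ for $1\le k\le p^r$, so for $2\le k\le p^r$ the point is $(k,r-\n_p(k))$, the last being $(p^r,0)$; (ii) $\n_p(F'(\alpha))=\n_p(n\alpha^{n-1}+a)\ge\min(r,\mu)$; (iii) writing $\alpha^s=-b+pe$ and expanding $\alpha^{sp^r}=(-b+pe)^{p^r}$ by the binomial theorem, and using (i) with $s=1$,
\[
F(\alpha)=(b-b^{p^r})+a\alpha+p^{r+1}(-b)^{p^r-1}e+(\text{higher-order terms}),
\]
where $\n_p(b-b^{p^r})=\n_p(b^{p^r-1}-1)=\nu$ (because $b^{p^r-1}-1=(b^{p-1})^{M}-1$ with $M=\tfrac{p^r-1}{p-1}\equiv 1\pmod p$), $\n_p(a\alpha)=\mu$, and $\n_p(p^{r+1}(-b)^{p^r-1}e)=r+1+\n_p(e)$. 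When $\delta\ne r+1$ — which holds under all four hypotheses — these three summands cannot cancel one another (they lie in different coordinates of $\Z_p[\alpha]$, or, when $\delta>r+1$, the chosen lift gives $\n_p(e)=0$), so $\n_p(F(\alpha))=\min(\mu,\nu,r+1)=\omega$. Computing the lower convex hull of these points then shows that $N_{\phi_i}^+(F)$ joins $(0,\omega)$ to $(p^r,0)$ with vertices $(0,\omega)$ and $(p^j,r-j)$ for $j=r-\omega+1,\dots,r$, and hence consists of exactly $\omega$ sides, each of vertical drop $1$; such a side has horizontal length $\ell$ and slope $-1/\ell$ in lowest terms, so its residual polynomial has degree $1$ and is separable.

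Therefore $F$ is $p$-regular, and Ore's theorem yields $p\Z_K=\prod_{i=1}^{t}\prod_{j=1}^{\omega}\pF_{ij}^{e_{ij}}$ with $f(\pF_{ij})=m_i$; in particular $\Z_K$ has exactly $\omega\,N_p(m,s,b)$ prime ideals above $p$ of residue degree $m$. Hypothesis (1) makes this number exceed $N_p(m)$ for some $m>1$; hypotheses (2), (3), (4) each force $\delta\ne r+1$ and $\omega=\mu$, $\nu$, $r+1$ respectively, whence $\omega\,N_p(1,s,b)>p=N_p(1)$. In every case the index--divisor criterion gives $p\mid i(K)$, so $K$ is not monogenic. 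The delicate step — where I expect the real work to lie — is ingredient (iii): establishing $\n_p(F(\alpha))=\omega$ (making the choice of lift $\phi_i$ precise and seeing exactly why $\delta=r+1$ must be excluded), together with the bookkeeping that assembles the points into a polygon with precisely $\omega$ sides of drop $1$.
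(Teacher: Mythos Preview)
Your proposal is correct and follows the same overall strategy as the paper: compute the $\phi$-principal Newton polygon for each irreducible factor $\phi$ of $x^s+\bar b$, show it has exactly $\omega$ sides of degree $1$, apply Ore's theorem, and invoke the common-index-divisor criterion. The difference is purely in how the polygon is obtained. The paper writes $x^s+b=\phi\,U+pT$ with $\bar\phi\nmid\overline{UT}$ (their Lemma~4.1), expands $F=(\phi U+pT-b)^{p^r}+ax+b$ into an \emph{admissible} $\phi$-development, and then appeals to the admissibility lemma (their Lemma~3.2) to read off the polygon. You instead pass to the unramified extension $\Z_p[\alpha]$ and use the Taylor expansion at a root $\alpha$ of $\phi$, computing $\nu_p\bigl(F^{(k)}(\alpha)/k!\bigr)$ directly from $\nu_p\binom{sp^r}{k}=r-\nu_p(k)$ and the binomial expansion of $(-b+pe)^{p^r}$. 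Both devices encode the same data (the $(x-\alpha)$-Newton polygon over the unramified extension coincides with the $\phi$-Newton polygon over $\Q_p$), and both require the same care in choosing the lift so that $\nu_p(\alpha^s+b)=1$ when $\delta>r+1$ --- your choice of $\phi_i$ with $\nu_p(e)=0$ is exactly the paper's requirement $\bar\phi\nmid\bar T$. Your route is a bit more conceptual and sidesteps the admissible-development machinery; the paper's is more explicitly polynomial and self-contained within the Ore/Montes--Nart framework it has already set up.
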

The following corollary gives certain  infinite families of non-monogenic  number fields defined by irreducible trinomials of degree $2^k \cdot 3^r$, where $k$ and $r$ are two positive rational integers. 
\begin{corollary}\label{corn11} Let $k$, $r$ be two positive rational integer, and $F(x) = x^{2^k \cdot 3^r} + ax +b$. If one of the following conditions holds:
	\begin{enumerate}
		
		\item $k  \ge 1$, $r = 3$, $a \equiv 0 \md{243} $ and $b \equiv -1 \md{243}$,
		\item $k \in \{1,2\} $, $r \ge 4$ and   $(\bar{a},\bar{b}) \in  \{\overline{81}, \overline{162}\}\times  \{ \overline{80}, \overline{161}, \overline{242} \} \cup \{ \overline{0}\}\times \{\overline{80}, \overline{161} \}$ in $(\Z/{243\Z})^2$, 
		\item $k \geq 1 $,  $r=1$, $a \equiv 0 \md {27}$ and $b \equiv -1 \md {27}$,
		\item $k\ge 1$, $r \ge 2$ and $(\bar{a},\bar{b})\in  \{\overline{9}, \overline{18}\}\times \{\overline{26} \} \cup \{\overline{0} \} \times \{ \overline{8}, \overline{17} \}$ in $(\Z/{27\Z})^2$,
		\item $k \ge 1$,  $r \ge 3$ and 	$(\bar{a}, \bar{b}) \in  \{\overline{0}\}\times  \{\overline{26}, \overline{53}\}\cup \{\overline{27}, \overline{54}\}\times \{\overline{26}, \overline{53},\overline{80}\}$ in $(\Z/{81\Z})^2$,
		\item  $k \ge 1$, $r=2$, $ a\equiv 0 \md {81}$ and $b\equiv  -1 \md{81}$,
		\item $k=1$, $r=3$, $a \equiv 0 \md{243}$ and $b\equiv 1 \md{243}$,
		\item $k=1$, $r\ge 4$ and $(\bar{a}, \bar{b})\in \{\overline{81}, \overline{162}\}\times 
	\{\overline{1}, \overline{82}, \overline{163}\}\cup \{\overline{0}\}\times \{\overline{82}, \overline{163}\}$ in $(\Z/{243\Z})^2$. 
		\item $k=2$, $r=1$, $a\equiv 0 \md{27}$ and $b \equiv 1 \md{27}$,
		
		\item $k = 2 $, $r \ge 2$ and  $(\bar{a}, \bar{b}) \in \{\overline{9}, \overline{18}\}\times \{\overline{1}, \overline{10}, \overline{19}\} \cup \{\overline{0}\}\times\{\overline{10}, \overline{19}\}$ in $(\Z/{27\Z})^2$,  
		\item $k \ge 3$, $r \ge 2$ and   $(\bar{a}, \bar{b})= \{\overline{9}, \overline{18}\}\times \{ \overline{8}, \overline{17}\}$  in $(\Z/{27\Z})^2$,  
	\end{enumerate}
	then $K$ is not monogenic.
\end{corollary}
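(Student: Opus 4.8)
The plan is to obtain every item as an instance of Theorem~\ref{dn1} with the prime $p=3$. Since $n=2^{k}\cdot 3^{r}$ with $r\ge 1$, we have $3\mid n$; writing $n=s\cdot 3^{r}$ gives $s=2^{k}$ with $3\nmid s$; and in each of the eleven items the congruence imposed on $a$ forces $3\mid a$, while the one imposed on $b$ forces $b\equiv\pm 1\pmod 3$, hence $3\nmid b$. Thus Theorem~\ref{dn1} is applicable, and it remains, item by item, to compute $\mu=\nu_{3}(a)$, $\nu=\nu_{3}(b^{2}-1)$, $\delta=\min(\mu,\nu)$, $\omega=\min(\delta,r+1)$ together with the relevant values of $N_{3}(m)$ and $N_{3}(m,2^{k},b)$, and then to exhibit one of the conditions $(1)$--$(4)$ of that theorem.

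First I would read off the $3$-adic input. Since $3\nmid b$, exactly one of $b-1$, $b+1$ is divisible by $3$, so $\nu=\nu_{3}(b-1)$ if $b\equiv 1\pmod 3$ and $\nu=\nu_{3}(b+1)$ if $b\equiv -1\pmod 3$; the stated congruences modulo $27=3^{3}$, $81=3^{4}$ or $243=3^{5}$ then pin down $\mu$ and $\nu$ exactly (for instance $\bar a\in\{\overline{81},\overline{162}\}$ in $\Z/243\Z$ means $\mu=4$, whereas $\bar a=\overline{0}$ means $\mu\ge 5$, and similarly for $\nu$ via $b\pm 1$). For the counting functions I would use $N_{3}(1)=3$ and $N_{3}(2)=3$, and, crucially, the factorisation of $x^{2^{k}}+\bar b$ over $\F_{3}$: if $b\equiv -1\pmod 3$ it equals $x^{2^{k}}-1=\prod_{j=0}^{k}\Phi_{2^{j}}(x)$, while if $b\equiv 1\pmod 3$ it equals $x^{2^{k}}+1=\Phi_{2^{k+1}}(x)$, where $\Phi_{2^{j}}(x)=x^{2^{j-1}}+1$. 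Each $\Phi_{2^{j}}$ is separable modulo $3$ and splits over $\F_{3}$ into $\varphi(2^{j})/e_{j}$ irreducibles of degree $e_{j}=\mathrm{ord}_{2^{j}}(3)$, with $e_{1}=1$, $e_{2}=2$, and $e_{j}=2^{j-2}$ for $j\ge 3$. This yields $N_{3}(1,2^{k},b)=2$ if $b\equiv -1\pmod 3$ and $N_{3}(1,2^{k},b)=0$ if $b\equiv 1\pmod 3$; moreover $N_{3}(2,2,b)=0$, $N_{3}(2,4,b)=1$ and $N_{3}(2,2^{k},b)=3$ for $k\ge 3$ when $b\equiv -1\pmod 3$, and $N_{3}(2,2,b)=1$, $N_{3}(2,4,b)=2$ when $b\equiv 1\pmod 3$.

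Given these numbers, the verification is routine. In items $(1)$--$(6)$ and $(11)$ one has $b\equiv -1\pmod 3$, so $N_{3}(1,2^{k},b)=2$ and the thresholds $3/N_{3}(1,s,b)=3/2$ in conditions $(2)$--$(4)$ just say ``$\ge 2$''; condition $(2)$ then holds when $2\le\mu<\min(\nu,r+1)$, condition $(3)$ when $2\le\nu<\min(\mu,r+1)$, condition $(4)$ when $2\le r+1<\delta$, and on the ``diagonal'' sub-cases $\mu=\nu$ buried in the Cartesian products one invokes condition $(1)$, checking $\omega>N_{3}(m)/N_{3}(m,2^{k},b)$ with $m=2$ (for which $N_{3}(2,2^{k},b)\in\{1,3\}$ according as $k=2$ or $k\ge 3$). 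In items $(7)$--$(10)$ one has $b\equiv 1\pmod 3$, so $N_{3}(1,2^{k},b)=0$ and only condition $(1)$ can be used; here $x^{2^{k}}+1$ has an irreducible quadratic factor exactly for $k\le 2$, with $N_{3}(2,2,b)=1$ and $N_{3}(2,4,b)=2$, and one checks $\omega>N_{3}(2)/N_{3}(2,2^{k},b)$, that is $\omega>3$ for $k=1$ and $\omega>3/2$ for $k=2$, which the congruence constraints on $a,b,r$ are arranged to guarantee.

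The hard part is not any single inequality but the bookkeeping: one must unfold the Cartesian-product notation into its sub-cases (each either fixing $\mu$ and $\nu$ to a specific power of $3$ or declaring it at least the next power), and then confirm that the sub-case $\mu=\nu$ occurs only for the values of $k$ permitted by the statement --- this is exactly why item $(11)$ requires $k\ge 3$ and items $(7)$,$(8)$ require $k=1$, since condition $(1)$ needs $x^{2^{k}}\pm 1$ to possess an irreducible $\F_{3}$-factor of small enough degree, which happens only in those ranges. Carrying out this finite check across all eleven items --- reading $\mu,\nu$ off the congruences, the factor degrees off $e_{j}=\mathrm{ord}_{2^{j}}(3)$, and matching each item to one of $(1)$--$(4)$ --- completes the proof.
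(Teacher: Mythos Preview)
Your approach is essentially the paper's: apply Theorem~\ref{dn1} with $p=3$, compute the counts $N_{3}(m,2^{k},b)$ from the factorisation of $x^{2^{k}}\pm 1$ over $\F_{3}$, and match each item to one of the four conditions. The paper obtains those counts by citing Blake's explicit factorisation of $x^{2^{k}}+1$ over $\F_{p}$ for $p\equiv 3\pmod 4$, while you derive them from $\mathrm{ord}_{2^{j}}(3)$; the numerical outcomes ($N_{3}(1,2^{k},2)=2$, $N_{3}(2,4,2)=1$, $N_{3}(2,2^{k},2)=3$ for $k\ge 3$, $N_{3}(2,2,1)=1$, $N_{3}(2,4,1)=2$) agree, and the case-matching you describe is exactly the ``direct application'' the paper invokes.

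One point to watch: your blanket claim that the diagonal sub-cases $\mu=\nu$ hidden in the Cartesian products are always handled by condition~(1) with $m=2$ is not literally true for $k=1$ when $b\equiv -1\pmod 3$, since then $x^{2}-1$ has no quadratic factor and $N_{3}(2,2,b)=0$; such diagonal pairs do occur in item~(2) (e.g.\ $(\bar a,\bar b)=(\overline{81},\overline{80})$ with $k=1$). The paper's proof, being a one-line appeal to Theorem~\ref{dn1}, is equally silent here, so this is not a divergence from the paper but a place where both arguments would need an extra word (for instance, observing that when $\mu=\nu$ at most one of the two linear factors $x\mp 1$ can divide the residual constant $a_{3}x+((-b)^{3^{r}}+b)_{3}$, so the other still yields an admissible development with $\delta$ sides).
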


\begin{theorem}\label{dn2}
	For $F(x) = x^n+ ax+b \in \Z[x]$, let $p$ be an odd rational prime integer such that $p \mid b$, $p \mid (n-1)$ and  $p \nmid a$. Set $n-1 = u\cdot p^k$, where $p \nmid u$. Let $\sigma = \n_p(b)$, $\rho = \n_p(a^{p-1}-1)$, $\t = \min (\sigma, \rho)$, and $\kappa = \min ( \t , k+1)$. If one of the following conditions holds:
	\begin{enumerate}
		\item $\t \neq k+1$ and $\kappa > \frac{N_p(m)}{N_p(m,u,a)}$ for some $m>1$,		
		\item $p < \sigma \cdot  N_p(1,u,a)  + 1 $ and $\sigma < \min(\rho,k+1)$,
		\item  $p < \rho \cdot  N_p(1,u,a)  + 1 $ and $\rho < \min(\sigma,k+1)$,
		\item 	 $p <(k+1) \cdot  N_p(1,u,a) + 1 $ and $k+1 < \t$, 
			\end{enumerate}
	then $K$ is not monogenic.
	
\end{theorem}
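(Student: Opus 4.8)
The plan is to show that, under each of the hypotheses (1)--(4), the odd prime $p$ is a \emph{common index divisor} of $K$, i.e. $p\mid i(K)$; since $i(K)$ divides $(\Z_K:\Z[\eta])$ for every $\eta\in\widehat{\Z}_K$ (by \eqref{indexdiscrininant}--\eqref{i(K)}), this forces $\Z_K\ne\Z[\eta]$ for all $\eta$, i.e. $K$ is not monogenic. To prove $p\mid i(K)$ I use the classical criterion: if for some integer $m\ge 1$ the number of prime ideals of $\Z_K$ above $p$ of residue degree $m$ exceeds $N_p(m)$, then $p\mid i(K)$, because no monic polynomial of degree $n$ over $\F_p$ has that many distinct irreducible factors of degree $m$, so by the Dedekind criterion no $\eta\in\widehat{\Z}_K$ can have $(\Z_K:\Z[\eta])$ prime to $p$. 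Thus the whole task reduces to decomposing $p$ in $\Z_K$ and counting the primes of a fixed residue degree, which I carry out with the Newton polygon method of Ore; the argument is dual to that of Theorem~\ref{dn1}.

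Since $p\mid b$, $p\nmid a$ and $n-1=up^{k}$ with $p\nmid u$ (so $k\ge 1$), reduction modulo $p$ gives, using that the Frobenius fixes $\F_p$,
\[
\overline F(x)=x\bigl(x^{\,n-1}+\overline a\bigr)=x\,\bigl(x^{u}+\overline a\bigr)^{p^{k}}\qquad\text{in }\F_p[x],
\]
and $x^{u}+\overline a$ is separable, so it splits into distinct monic irreducibles of which exactly $N_p(m,u,a)$ have degree $m$. The simple factor $x$ already contributes one prime $\pF_{0}$ above $p$ with $e(\pF_{0})=f(\pF_{0})=1$. For a suitably chosen monic lift $\phi$ of an irreducible factor of $x^{u}+\overline a$ of degree $m$ I would compute the $\phi$-Newton polygon of $F$ from the expansion of $F$ in powers of $\psi:=x^{u}+a$,
\[
F=\bigl(b-a\,(a^{p^{k}-1}-1)\,x\bigr)+\sum_{i=1}^{p^{k}}\binom{p^{k}}{i}(-a)^{p^{k}-i}\,x\,\psi^{i},
\]
using $\n_p\!\bigl(\binom{p^{k}}{i}\bigr)=k-\n_p(i)$ and the elementary identity $\n_p(a^{p^{k}-1}-1)=\n_p(a^{p-1}-1)=\rho$ (valid for the odd prime $p$ because $\tfrac{p^{k}-1}{p-1}\equiv 1\pmod p$). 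For a good lift the terms with $i\ge 1$ feed the $\phi^{0}$-coefficient of the $\phi$-adic expansion only at valuations $\ge k+1$, whereas $b-a(a^{p^{k}-1}-1)x$, being of degree $\le 1<m$ when $m>1$, feeds it at valuation exactly $\t=\min(\sigma,\rho)$. Hence the principal $\phi$-Newton polygon $N_\phi^{+}(F)$ runs from $(0,\kappa)$, with $\kappa=\min(\t,k+1)$, down to $(p^{k},0)$ through the candidate vertices $(p^{i},k-i)$; each of its $\kappa$ sides has height one, so every attached residual polynomial has degree one, $F$ is $\phi$-regular, and Ore's theorems yield that this $\phi$ produces exactly $\kappa$ primes above $p$, each of residue degree $m$. (The case $\t=k+1$ is excluded in (1) because there the two contributions to the $\phi^{0}$-coefficient occur at the common valuation $k+1$ and may cancel, raising the initial ordinate.)

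Then I would compare counts. Under (1) there are $N_p(m,u,a)$ irreducible factors of degree $m>1$, each yielding $\kappa$ primes of residue degree $m$, hence at least $\kappa\cdot N_p(m,u,a)>N_p(m)$ primes above $p$ of residue degree $m$, so $p\mid i(K)$. Under (2)--(4) the relevant residue degree is $1$: for each of the $N_p(1,u,a)$ linear factors $\phi=x-\g$ the remainder $F(\tilde{\g})$ of $F$ modulo $\phi$ has $p$-adic valuation exactly $\kappa$ — and here the three cases separate, since $\sigma<\min(\rho,k+1)$, $\rho<\min(\sigma,k+1)$ and $k+1<\t$ are precisely what excludes cancellation in $b-a(a^{p^{k}-1}-1)\tilde{\g}$ and pins $\kappa$ down to $\sigma$, $\rho$, $k+1$ respectively — so each such $\phi$ again gives $\kappa$ primes of residue degree $1$. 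Together with $\pF_{0}$ this yields at least $1+\kappa\cdot N_p(1,u,a)$ primes above $p$ of residue degree $1$; the hypothesis $p<\kappa\cdot N_p(1,u,a)+1$ — which is exactly condition (2), (3) or (4) according to the value of $\kappa$, the summand $1$ accounting for $\pF_{0}$ — makes this exceed $N_p(1)=p$, so again $p\mid i(K)$. In every case $K$ is not monogenic.

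The main obstacle is the $\phi$-Newton polygon step: one must choose the lift $\phi$ so that $\n_p(\psi\bmod\phi)=1$, check that the $\phi^{0}$-coefficient of $F$ has valuation exactly $\kappa$ and that the other coefficients sit at the heights $k-\n_p(i)$ with no intermediate point of the expansion dropping below the claimed hull, and verify that every side of $N_\phi^{+}(F)$ carries a degree-one residual polynomial, so that Ore's regularity hypothesis holds and the resulting count of primes and of their residue degrees is exact. The linear factors occurring in (2)--(4), where $F(\tilde{\g})$ could a priori be more $p$-divisible, and the excluded boundary case $\t=k+1$, are the delicate points of the bookkeeping.
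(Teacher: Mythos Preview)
Your proposal is correct and follows essentially the same route as the paper: factor $\overline F=x(x^u+\overline a)^{p^k}$, for each irreducible factor $\phi$ of $x^u+\overline a$ compute $N_\phi^+(F)$ via the expansion $F=x(\psi-a)^{p^k}+ax+b$ with $\psi=x^u+a$, show that this polygon has exactly $\kappa$ sides of height one, and then count primes of residue degree $m$ (resp.\ $1$, together with the extra prime from the simple factor $x$) to invoke Lemma~\ref{comindex}. The paper makes your ``good lift'' step precise by Lemma~\ref{lemtech} (choosing $\phi$ with $x^u+a=\phi U+pT$, $\overline\phi\nmid\overline{UT}$) and Lemma~\ref{admiss} on admissible $\phi$-developments, which is exactly what resolves the obstacle you flag at the end.
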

As a consequence of Theorem \ref{dn2}, the following corollary  gives explicitly certain  infinite families of non-monogenic number fields defined by irreducible trinomials  of degree $2^s \cdot 3^r+1$.   
\begin{corollary}\label{corn12}
	For $F(x) = x^{2^s \cdot 3^k+1} + ax +b \in \Z[x]$. If one of the following conditios holds:
\begin{enumerate}
\item  $s=0$, $k \ge 3$ and  $(\bar{a},\bar{b}) \in \{\overline{1},\overline{80}\}\times  \{\overline{27}, \overline{54}\}\cup \{\overline{26}, \overline{28}, \overline{53}, \overline{55}\} \times \{ \overline{0} \}$ in $(\Z/{81\Z})^2$,	
\item  $s=0$, $k \ge 4$ and  $(\bar{a},\bar{b}) \in \{\overline{1},\overline{242} \} \times \{\overline{81}, \overline{162}\}\cup \{\overline{80},  \overline{82}, \overline{161},  \overline{163}\} \times \{ \overline{0} \}$ in $(\Z/{243\Z})^2$, 
\item	$s=0$, $k =2$, $a \equiv \pm 1 \md{81} $ and $b\equiv 0 \md{81}$,
\item	$s=0$, $k =3$, $a \equiv \pm 1 \md{243} $ and $b\equiv 0 \md{243}$,
\item $s \ge 1$, $k \ge 2$ and   $(\bar{a},\bar{b}) \in \{\overline{26}\} \times \{ \overline{9}, \overline{18}\}\cup \{\overline{8}, \overline{17}\} \times \{ \overline{0} \}$ in $(\Z/{27\Z})^2$,
\item $s \ge 1$, $k \ge 3$ and $(\bar{a},\bar{b}) \in \{\overline{80} \,\, ;  \,\, \overline{27}, \overline{54}\}\cup \{\overline{26}, \overline{53} \,\,;  \,\, \overline{0},  \overline{27},  \overline{54} )$ in $(\Z/{81\Z})^2$,
\item $s \ge 1$, $k  = 1$, $a \equiv -1 \md {27}$ and $b\equiv 0 \md {27}$,
\item $s \ge 1$, $k  = 2$, $a \equiv -1 \md {81}$ and $b\equiv 0 \md {81}$,
 \item $s=2$, $k=3$, $a \equiv -1 \md{243}$ and $b\equiv 0 \md{243}$,
  \item $s=2$, $k\ge 4$ and  $(\bar{a},\bar{b}) \in \{\overline{80}, \overline{161}, \overline{242}\} \times \{\overline{81}, \overline{162} \}\cup \{\overline{80}, \overline{161}\} \times \{\overline{0} \}$ in $(\Z/{243\Z})^2$,
\item $ s \ge 3$, $ k \ge 2$ and  $(\bar{a},\bar{b}) \in\{\overline{8}, \overline{17}, \overline{26}\} \times \{\overline{9}, \overline{18} \}\cup \{\overline{8}, \overline{17} \} \times \{\overline{0} \}$ in $(\Z / {27\Z})^2$,
 \item $s=1$, $k=3$, $a \equiv 1 \md{243}$ and $b\equiv 0 \md{243}$,
 \item $s=1$, $k\ge 4$ and  $(\bar{a},\bar{b}) \in \{\overline{1}, \overline{82}, \overline{163}\} \times \{ \overline{81}, \overline{162} \}\cup \{\overline{82}, \overline{163} \} \times \{ \overline{0} )$ in $(\Z/{243\Z})^2$, 
 \item $s=2$, $k=1$, $a \equiv 1 \md{27}$ and $ b\equiv 0 \md{27}$,
\item $s=2$, $k\ge 2$ and $(\bar{a},\bar{b}) \in \{\overline{1},\overline{10}, \overline{19}\}\times\{\overline{9}, \overline{18} \}\cup \{\overline{10}, \overline{19}\}\times\{ \overline{0}\}$ in $(\Z/{27\Z})^2$,  
\end{enumerate}	
then $K$ is not monogenic.	
		
\end{corollary}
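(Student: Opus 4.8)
The plan is to deduce the corollary directly from Theorem~\ref{dn2}, applied with the prime $p=3$. Write the degree as $n=2^{s}\cdot 3^{k}+1$, so that $n-1=2^{s}\cdot 3^{k}$; since $k\ge 1$ in every one of the fifteen listed cases we have $3\mid(n-1)$, and in the notation of Theorem~\ref{dn2} we take $u=2^{s}$ (so $3\nmid u$) and let the exponent there be the $k$ of the corollary. In each case the displayed congruences modulo $27$, $81$ or $243$ show at once that $3\nmid a$ and $3\mid b$: every admissible residue of $a$ is $\equiv\pm1\md{3}$ and every admissible residue of $b$ is $\equiv 0\md{3}$. Thus the divisibility hypotheses $p\mid b$, $p\mid(n-1)$, $p\nmid a$ of Theorem~\ref{dn2} are met, and (since $F$ is assumed irreducible) the whole proof reduces to computing $\sigma=\nu_{3}(b)$, $\rho=\nu_{3}(a^{2}-1)$, $\tau=\min(\sigma,\rho)$, $\kappa=\min(\tau,k+1)$ and the relevant factor counts $N_{3}(m,2^{s},a)$, and then verifying that one of the four numerical conditions of the theorem holds.

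Two computational preparations are needed. First, the $3$-adic valuations: since $a\equiv\pm1\md{3}$, exactly one of $a-1$, $a+1$ is divisible by $3$, so $\rho=\nu_{3}(a^{2}-1)=\max\big(\nu_{3}(a-1),\nu_{3}(a+1)\big)$, which is read off directly from the residue of $a$ (for instance $a\equiv -1\md{81}$ together with $a\not\equiv -1\md{243}$ forces $\rho=4$, while $a\equiv 8\md{27}$ forces $\rho=2$), and likewise $\sigma=\nu_{3}(b)$ is read off from the residue of $b$. Second, the factor counts over $\F_{3}$: every nonzero element of $\F_{3}$ is its own square, so $c^{2^{s}}=1$ for all $c\in\F_{3}^{\times}$ when $s\ge 1$; hence for $s\ge 1$ the polynomial $x^{2^{s}}+\overline{a}$ has a root in $\F_{3}$ exactly when $\overline{a}=-1$, and then it has the two roots $\pm1$, so $N_{3}(1,2^{s},a)=2$, whereas $N_{3}(1,1,a)=1$ when $s=0$ (and for $s=0$ there are no factors of degree $>1$, so only conditions (2)--(4) are then available). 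For $m=2$ one factors $x^{2^{s}}+\overline{a}$ over $\F_{3}$ cyclotomically --- using $x^{2^{s}}-1=\prod_{d\mid 2^{s}}\Phi_{d}$ when $\overline{a}=-1$ and $x^{2^{s}}+1=\Phi_{2^{s+1}}$ when $\overline{a}=1$, together with the facts that $\Phi_{4}=x^{2}+1$ is irreducible over $\F_{3}$ and $\Phi_{8}=x^{4}+1$ splits there into two irreducible quadratics (because $3^{2}\equiv 1\md{8}$), while $\Phi_{2^{j}}$ with $j\ge 4$ contributes only factors of degree $\ge 4$ --- which gives $N_{3}(2,2^{s},a)$ explicitly; in particular $N_{3}(2,2^{s},a)=3=N_{3}(2)$ when $\overline{a}=-1$ and $s\ge 3$.

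With these in hand the verification falls into a few uniform patterns. When $\overline{a}=-1$, so $N_{3}(1,2^{s},a)\in\{1,2\}$ is positive, one compares $\tau=\min(\sigma,\rho)$ with $k+1$: if $\sigma<\min(\rho,k+1)$ one invokes condition (2), if $\rho<\min(\sigma,k+1)$ condition (3), and if $k+1<\tau$ condition (4); in every such instance the remaining inequality $p<(\,\cdot\,)\,N_{3}(1,2^{s},a)+1$ asserts $3<N$ for an integer $N\ge 4$ and so holds automatically, and the entire burden falls on the strict inequality involving $k+1$ --- which is exactly why lower bounds such as $k\ge 3$ in case (6) or $k\ge 4$ in cases (2) and (10) are imposed. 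When $\overline{a}=1$ and $s\ge 1$, which in the listed cases forces $s\in\{1,2\}$, one has $N_{3}(1,2^{s},a)=0$, conditions (2)--(4) become vacuous, and one must use condition (1) with $m=2$: the chosen congruences keep $\tau$ strictly below $k+1$, so $\tau\ne k+1$, and condition (1) then asks for $\kappa>N_{3}(2)/N_{3}(2,2^{s},a)=3/N_{3}(2,2^{s},a)$, which requires $\kappa\ge 4$ when $s=1$ (whence the modulus-$243$ subfamilies with $k\ge 3$) and only $\kappa\ge 2$ when $s=2$. Finally, the residual cases with $\overline{a}=-1$ but $\sigma=\rho$, in which (2)--(4) fail merely by an equality, are treated by condition (1) as well, and there a hypothesis such as $s\ge 3$ in case (11) is precisely what makes $N_{3}(2,2^{s},a)\ge 2$ and hence $\kappa=2>3/N_{3}(2,2^{s},a)$.

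The obstacle is organisational rather than conceptual. There are fifteen cases, several splitting into two subfamilies according to whether $b\equiv 0$ or $b\not\equiv 0$ modulo the relevant power of $3$, and each demands that $\nu_{3}(a^{2}-1)$, $\nu_{3}(b)$ and the count $N_{3}(\,\cdot\,,2^{s},a)$ be pinned down exactly. The genuinely delicate points are the borderline instances --- where $\sigma=\rho$, or where $\overline{a}=1$ and hence $N_{3}(1,2^{s},a)=0$ --- since there condition (1) is the only available tool; its applicability rests on getting the factorisation of $x^{2^{s}}\pm1$ over $\F_{3}$ exactly right (equivalently, on the multiplicative orders of $3$ modulo powers of $2$), and on checking that the bounds on $s$ and $k$ in the statement are precisely those for which $\kappa>N_{3}(m)/N_{3}(m,2^{s},a)$ holds for a suitable $m>1$.
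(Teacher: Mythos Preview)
Your proposal is correct and follows essentially the same route as the paper: the paper's proof simply computes the numbers $N_3(m,2^s,a)$ (citing Blake--Gao--Mullin for the explicit factorisation of $x^{2^k}\pm 1$ over $\F_3$, where you instead use the cyclotomic decomposition and orders of $3$ modulo $2^j$) and then invokes Theorem~\ref{dn2} directly. Your write-up actually gives more detail than the paper does on which of conditions (1)--(4) is used in each subfamily, but the underlying strategy is identical.
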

	Notice that, 	Theorems  \ref{dp^r}, \ref{dn1}, \ref{dn2}, and the main result of \cite{SK} and \cite{Smt}   does not cover the monogenity of quintic number  fields defined by $x^5+ax+b$ when $\Z_K\neq \Z[\th]$.  The following theorem gives a special study to theses number fields. 
\begin{theorem} \label{d51}
	 If one of the following conditions holds:
	\begin{enumerate}
		\item $a\equiv 1 \md 4$ and $b\equiv 2  \md 4$,
		\item   $(\bar{a},\bar{b})= ( \bar{7}, \bar{8}) \,\,\mbox{or}\,\, (\bar{15}, \bar{0})$ in $(\Z/{16\Z})^2$,		
		\item  $(\bar{a},\bar{b}) = (\overline{19},\overline{4}) \, \mbox{or}\,\,(\overline{3},\overline{20})$ in  $(\Z/{32\Z})^2$,		
			\item	 $(\overline{a},\overline{b})= ( \overline{3}, \overline{4}),  (\overline{35}, \overline{36}), ( \overline{19}, \overline{20}) \,\, \mbox{or}\,\,  (\overline{51}, \overline{52})$ in $(\Z/{64\Z})^2$,
	\item  $(\bar{a},\bar{b}) = (\overline{3},\overline{12}) \, \mbox{or}\,\,(\overline{19},\overline{28})$ in  $(\Z/{32\Z})^2$,
	\item  $(\overline{a},\overline{b})= ( \overline{3}, \overline{60}),  (\overline{19}, \overline{44}), ( \overline{35}, \overline{28}) \,\, \mbox{or}\,\,  (\overline{51}, \overline{12})$ in $(\Z/{64\Z})^2$,
			\item   $a \equiv 4 \md 8$ and $b \equiv 0 \md{8} $,
	\end{enumerate}
then $K$ is not monogenic.
\end{theorem}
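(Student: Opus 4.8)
The plan is to show that in each of the seven cases the rational prime $2$ is a common index divisor of $K$, which forces $K$ to be non‑monogenic. I would invoke the classical criterion (Hensel, Engstrom): $q\mid i(K)$ as soon as, for some $f\ge 1$, the number of prime ideals of $\Z_K$ above $q$ of residue degree $f$ exceeds the number $N_q(f)$ of monic irreducible polynomials of degree $f$ in $\F_q[x]$. For $q=2$ and $[K:\Q]=5$ one has $N_2(1)=2$ and $N_2(2)=1$, so — since $\sum_{\mathfrak p\mid 2}e_{\mathfrak p}f_{\mathfrak p}=5$ — it suffices to prove that $2\Z_K$ has at least three prime factors of residue degree $1$ (equivalently, that the factorization type of $2$ is $\mathfrak p_1\mathfrak p_2\mathfrak p_3\mathfrak p_4\mathfrak p_5$, $\mathfrak p_1^2\mathfrak p_2\mathfrak p_3\mathfrak p_4$, $\mathfrak p_1^3\mathfrak p_2\mathfrak p_3$ or $\mathfrak p_1^2\mathfrak p_2^2\mathfrak p_3$, all residue degrees $1$), or that $2$ has at least two prime factors of residue degree $2$ (type $\mathfrak p_1\mathfrak p_2\mathfrak p_3$ with $f(\mathfrak p_1)=f(\mathfrak p_2)=2$). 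Thus the whole proof reduces to determining the decomposition of $2$ in $\Z_K$ under each congruence hypothesis, which I would carry out by Newton polygon techniques (Ore's theorem, and where necessary its higher‑order refinement).

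First I would reduce $F$ modulo $2$. In cases (1)–(6) each hypothesis forces $a$ odd, hence $\overline F=x(x+1)^4$ in $\F_2[x]$; the simple factor $x$ lifts by Hensel to a linear factor of $F$ over $\Z_2$, giving one prime of residue degree $1$, and all the difficulty lies in the factor $(x+1)^4$. To analyse it I would expand $F$ in powers of $\phi=x+1$, namely $F(x)=\phi^5-5\phi^4+10\phi^3-10\phi^2+(5+a)\phi+(b-a-1)$, compute the $2$‑adic valuations of the coefficients — this is precisely why the hypotheses pin $a$ and $b$ down modulo $16$, $32$ or $64$: to fix $\nu_2(b-a-1)$, $\nu_2(5+a)$ and later $\nu_2(1+a+b)$ — then draw the principal $\phi$‑Newton polygon and read off its sides and residual polynomials. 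In case (7) both $a$ and $b$ are even, so $\overline F=x^5$, and I would instead expand in powers of $\phi=x$ and proceed in the same way.

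In cases (1)–(4) one checks that every side of the $\phi$‑Newton polygon has a separable (in fact linear) residual polynomial, so Ore's theorem applies verbatim and outputs the full factorization: three or four primes of residue degree $1$ (for instance $2\Z_K=\mathfrak p_1\mathfrak p_2\mathfrak p_3^3$ in case (1), $2\Z_K=\mathfrak p_1\mathfrak p_2^2\mathfrak p_3^2$ in case (3), $2\Z_K=\mathfrak p_1\mathfrak p_2\mathfrak p_3\mathfrak p_4^2$ in cases (2) and (4)); in each the number of degree‑one primes is $\ge 3$, so $2\mid i(K)$. In cases (5) and (6) the side of slope $-1$ of the $\phi$‑polygon carries the inseparable residual polynomial $(y+1)^2$, so Ore's theorem does not conclude; there I would replace $\phi=x+1$ by the sharper approximation $\phi'=x-1$ (equivalently expand $F(x)=\phi'^5+5\phi'^4+10\phi'^3+10\phi'^2+(5+a)\phi'+(1+a+b)$), recompute the Newton polygon, and verify that its sides now have separable residual polynomials — the extra $2$‑adic precision imposed on $a,b$ is exactly what makes $\nu_2(1+a+b)$ computable — thereby again producing the required count of degree‑one primes. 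Case (7) is the genuinely delicate one: the side of slope $-1/2$ of the $x$‑polygon has inseparable residual polynomial $(y+1)^2$ and $e_1f_1=2>1$, so no linear change of variable suffices and I would pass to a true second‑order Newton polygon with an auxiliary polynomial $\phi_2$ of degree $2$ (a representative of the first‑order type, $\phi_2\equiv x^2-2$), compute the $\phi_2$‑adic expansion of $F$ and the second‑order (augmented) valuations of its coefficients, and extract the second‑order residual polynomial to resolve the splitting.

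The hard part will be exactly this second‑order analysis in case (7): choosing a correct representative $\phi_2$, handling the augmented valuation $v_1$ (normalized by $v_1(2)=2$, $v_1(x)=1$), and tracking correctly how the ramification index and the residue degree accumulate over the two levels — this is where a miscalculation is easiest and where the precision of the congruence hypotheses is really used; one also has to keep in mind the standing regularity/irreducibility assumption on $F$. Once the decomposition type of $2\Z_K$ is determined in each of the seven cases, every one of them displays either at least three primes of residue degree $1$ or at least two primes of residue degree $2$, so by the criterion recalled above $2$ is a common index divisor of $K$; since a monogenic number field admits no common index divisor, $K$ is not monogenic in any of the seven cases, which is the assertion of the theorem.
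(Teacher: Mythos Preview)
Your proposal is correct and follows essentially the same strategy as the paper: show $2\mid i(K)$ via Ore's theorem (and second-order Newton polygons in case (7) with $\phi_2=x^2-2$), then invoke the Hensel--Engstrom criterion. The only difference is cosmetic: you take $\phi=x+1$ as your initial lift of the repeated factor, while the paper takes $\phi=x-1$; this choice makes cases (3)--(4) regular for you (the paper has to pass to $\psi=x-3$ there) but forces you to switch lifts in cases (5)--(6) (where the paper's $x-1$ is already regular), so the two arguments are mirror images of one another.
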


 The following theorem gives explicitly certain infinite families of non-monogenic sextic number fields defined by $x^6+ax+b$.

\begin{theorem}\label{d61}
 If one of the following conditions holds:
	\begin{enumerate}
		\item $a \equiv 0 \md 8$ and $b \equiv 7 \md 8$,
		\item  $a=2 \md 4$, $b \equiv 1 \md 4$ and $\n_2(1+a+b) = 2 \n_2(a+6)$. In particular if  $(\overline{a},\overline{b})= ( \overline{6}, \overline{9}),  (\overline{14}, \bar{1}), ( \overline{22}, \overline{25}) \,\, \mbox{or} \,\, (\overline{30}, \overline{17})$ in $(\Z/{64\Z})^2$,
		\item   $a \equiv 0 \md 8$ and $b \equiv 3 \md 8$,
		\item   $a\equiv 0 \md 9$ and $b \equiv -1 \md 9$,
	\end{enumerate}
then $K$ is not monogenic.
\end{theorem}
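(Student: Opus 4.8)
The plan is to exhibit, in each of the four cases, a small rational prime $q$ that is a common index divisor of $K$ (i.e.\ $q\mid i(K)$); since then $q\mid[\Z_K:\Z[\theta]]$ for every generator $\theta$, the field $K$ cannot be monogenic. To detect common index divisors I will use the classical sufficient criterion: $q$ is a common index divisor as soon as, for some $f\ge 1$, the number of prime ideals of $\Z_K$ above $q$ of residue degree $f$ strictly exceeds $N_q(f)$, the number of monic irreducible polynomials of degree $f$ in $\F_q[x]$; here I only need $N_2(1)=2$, $N_2(2)=1$ and $N_3(1)=3$. This is exactly the device already used to prove Theorems \ref{dp^r}, \ref{dn1} and \ref{dn2}, so the real content is the factorization of $2\Z_K$ in cases (1)--(3) and of $3\Z_K$ in case (4), which I will obtain from Ore's theorem applied to $\phi$-Newton polygons of $F$.

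For cases (1)--(3), take $q=2$. In all three, $a$ is even and $b$ is odd, hence $\overline{F}=x^{6}+1=(x+1)^{2}(x^{2}+x+1)^{2}$ in $\F_2[x]$, so $2\Z_K$ is controlled by the two repeated factors $\phi_1=x+1$ and $\phi_2=x^{2}+x+1$, each of multiplicity $2$. I will write down the $\phi_i$-adic developments of $F$: for $\phi_1$ it is the expansion of $F$ in powers of $x-1$, with leading $2$-adic data $\nu_2(1+a+b)$, $\nu_2(6+a)$, $\nu_2(15)=0$; for $\phi_2$ one has $F=\phi_2^{3}-3x\,\phi_2^{2}+(2x-2)\,\phi_2+(ax+b+1)$, with data $\nu_2(\gcd(a,b+1))$, $\nu_2(2)=1$, $\nu_2(3)=0$. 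The hypotheses pin down the shapes of the two length-$2$ principal polygons, and Ore's theorem reads off the primes (all residual polynomials being separable under the hypotheses, so no higher-order polygon is required): in case (1) each polygon breaks into two sides of length $1$, giving two primes over $\phi_1$ with $(e,f)=(1,1)$ and two primes over $\phi_2$ with $(e,f)=(1,2)$; in case (2) the condition $\nu_2(1+a+b)=2\,\nu_2(a+6)$ places the middle vertex of the $\phi_1$-polygon on a single side whose residual polynomial is $y^{2}+y+1$, giving one prime with $(e,f)=(1,2)$, while the $\phi_2$-polygon is a single side of slope $-1/2$, giving one prime with $(e,f)=(2,2)$ (the ``in particular'' clause is the translation of these valuation conditions into congruences modulo $64$); in case (3) the $\phi_1$-polygon is a single side of slope $-1$ with residual $y^{2}+y+1$ (one prime, $(e,f)=(1,2)$), and the $\phi_2$-polygon is a single side of slope $-1$ whose residual polynomial in $\F_4[y]$ admits $1$ as a root, hence splits into two distinct linear factors (two primes, $(e,f)=(1,2)$ each). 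In every case $\Z_K$ has at least two primes above $2$ of residue degree $2>N_2(2)=1$, so $2$ is a common index divisor and $K$ is not monogenic.

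For case (4), take $q=3$. Now $a\equiv 0$ and $b\equiv -1\pmod{9}$ give $\overline{F}=x^{6}-1=(x-1)^{3}(x+1)^{3}$ in $\F_3[x]$. For $\phi=x-1$ the development $F(1+y)=(1+a+b)+(6+a)y+15y^{2}+20y^{3}+\cdots$ satisfies $\nu_3(1+a+b)\ge 2$ and $\nu_3(6+a)=1$, so the principal $\phi$-polygon (of length $3$, ending at the vertex $(3,0)$ since $\nu_3(20)=0$) consists of a side of length $1$ and integer slope $1-\nu_3(1+a+b)\le -1$, followed by the side from $(1,1)$ to $(3,0)$ of slope $-1/2$ and denominator $2$; both sides carry a linear, hence separable, residual polynomial, so Ore's theorem produces one prime with $(e,f)=(1,1)$ and one with $(e,f)=(2,1)$. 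The same computation at $\phi=x+1$ (with $\nu_3(1-a+b)\ge 2$ and $\nu_3(-6+a)=1$) yields two more primes of residue degree $1$. Thus $3$ splits in $\Z_K$ with four primes of residue degree $1$, and since $N_3(1)=3$ this makes $3$ a common index divisor, so $K$ is not monogenic.

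The crux --- and the main obstacle --- in all four cases is to guarantee that every residual polynomial that appears is separable, so that Ore's theorem applies directly and one never has to pass to a second-order Newton polygon: this is precisely why case (2) carries the equality $\nu_2(1+a+b)=2\,\nu_2(a+6)$ (without it the $\phi_1$-residual would be $(y+1)^{2}$), and why the companion Theorem \ref{d51} needs congruences refined to moduli $16$, $32$ and $64$; likewise the heart of case (3) is the computation showing that the $\F_4$-valued residual of $\phi_2$ factors into distinct linear factors rather than being irreducible. The remaining verifications --- that $F$ stays irreducible under the given congruences and that the standing normalization ($\nu_q(a)<n-1$ or $\nu_q(b)<n$) loses nothing here --- are routine.
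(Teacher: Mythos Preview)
Your proof is correct and follows the same strategy as the paper: in each case you exhibit a small prime $q$ ($q=2$ in (1)--(3), $q=3$ in (4)), determine the factorization of $q\Z_K$ via the $\phi$-Newton polygons attached to the two irreducible factors of $\overline{F}$, and invoke the criterion $P_f>N_q(f)$ (here $P_2>N_2(2)=1$ or $P_1>N_3(1)=3$).  The $\phi$- and $\psi$-adic developments you use, the shapes of the polygons, and the residual computations all match the paper's.  Two cosmetic differences: in case~(1) you analyze both $\phi_1$ and $\phi_2$, whereas the paper only needs $\phi_2=x^2+x+1$ (the two primes of residue degree $2$ already force $P_2\ge 2>1$); and in case~(4) you lift $\overline{x+1}\in\F_3[x]$ to $x+1$ while the paper lifts it to $x-2$, which changes the constant terms but not the valuations or the conclusion.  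One small point to tighten in case~(3): from ``the residual admits $y=1$ as a root'' you deduce it splits into \emph{distinct} linear factors, but a quadratic with a root could in principle have a double root; here the constant term (after making the residual monic) is $\alpha^{-1}=\alpha^2\neq 1$, so the second root is $\alpha^2\neq 1$ and separability follows --- the paper does this check explicitly.
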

\section{Preliminaries}

For any $\eta \in \widehat{\Z}_K$, denote by $ ( \Z_K; \Z [\eta])$ the index of $\eta$ in $\Z_K$, where $\Z[\eta]$ is the $\Z$-module generated by $\eta$. It is well known  \cite[Proposition 2.13]{Na} that:   
\begin{eqnarray}\label{indexdiscrininant}
 D (\eta) = ( \Z_K; \Z [\eta])^2 D_K,
\end{eqnarray}
where $D(\eta)$ is the discriminant of the minimal polynomial of $\eta$ and $D_K$ is the absolute discriminant of $K$. In 1878, Dedekind gave the explicit factorization of $p\Z_K$ when $p \nmid  ( \Z_K; \Z [\eta])$ (see  \cite{Co}, \cite{R}, \cite[Theorem 4.33]{Na}). He also  gave  a criterion known as   Dedekind's  criterion to test  whether  $p$ divides  or not the index $(\Z_K:\Z[\eta])$ (see \cite[Theorem 6.14]{Co}, \cite{R}, \cite{Na}).
Let 
\begin{eqnarray}\label{i(K)}
i(K) = \gcd \ \{ ( \Z_K; \Z [\eta]) \, |\,\eta \in \widehat{\Z}_K  \}
\end{eqnarray} be the index  of the field  $K$. A rational prime integer $p$ dividing $ i(K)$ is called a prime common index divisor of $K$.  If $K$ is monogenic, then $i(K) = 1$. Thus a field possessing a  prime common index divisor is not monogenic. The existence of common index divisor was first established by R. Dedekind. He used Dedekind's criterion and his factorization  theorem to show that the cubic number field $K=\Q(\th)$, where $\th$ is a root of $x^3+x^2-2x+8$ cannot be monogenic, since the prime  $2$ splits completely in $\Z_K$.  Further, in \cite{R},  Dedekind gave a necessary and sufficient condition  on prime $p$ to be a common index divisor of $K$. This condition depends upon the factorization of the prime  $p$ in $\Z_K$ (see also \cite{He, Hen}).  E. Zylinski \cite{Zylinski} showed that, if $p$ divides $i(K)$ then $p < n$. When $p \nmid i(K)$; there exist $\eta \in \widehat{\Z}_K$ such that $p\nmid  ( \Z_K; \Z [\eta])$. Then,  by Dedekind's theorem,  we explicitly  factorize  $p\Z_K$; it is  analogous to the factorization of the minimal polynomial $P_{\eta}(x)$ of $\eta$  modulo $p$. But if $p$ divides the index $ i(K)$, then Dedekind's factorization  theorem is not applicable. Hensel \cite{Hecor}, proved that the prime ideals of $\Z_K$ lying above $p$ are in one-to-one correspondence with irreducible factors of $f(x)$ in $\Q_p(x)$. In 1928,   
O. Ore \cite{O} developed a method for factoring $f(x)$ in $\Q_p(x)$, factoring $p$ in $\Z_K$ when $f(x)$ is $p$-regular. The method based on Newton polygon techniques. Now, we  recall   some fundamental facts on  Newton polygon techniques  applied on prime ideal factorization. {For more details, we refer to \cite{El, EMN, Nar,  MN92, O}}.
Let $p$  be a rational prime integer and $\n_p$  the discrete valuation of $\Q_p(x)$  defined on $\Z_p[x]$ by $\n_p(\sum_{i=0}^{m} a_i x^i) = \min \{ \n_p(a_i), \, 0 \le i \le m\}$. Let $\phi \in \mathbb{Z}[x]$ be a monic polynomial whose reduction modulo $p$ is irreducible. Any monic irreducible polynomial $f(x) \in \mathbb{Z}[x]$ admits a unique $\phi$-adic development $ f(x )= a_0(x)+ a_1(x) \phi(x) + \cdots + a_n(x) {\phi(x)}^n,$ with $ deg \ ( a_i (x))  < deg \ ( \phi(x))$. For every $0\le i \le n,$   let $ u_i = \n_p(a_i(x))$. The $\phi$-Newton polygon of $f(x)$ is the  lower boundary convex envelope of the set  of points $ \{  ( i , u_i) \, , 0 \le i \le n \, , a_i(x) \neq 0  \}$ in the Euclidean plane, which we denote   by $N_{\phi} (f)$. The polygon  $N_{\phi} (f)$ is the union of different adjacent sides $ S_1, S_2, \ldots , S_g$ with increasing slopes $ \lambda_1, \lambda_2, \ldots,\lambda_g$. We shall write $N_\phi(f) = S_1+S_2+\cdots+S_g$. The polygon determined by the sides of negative slopes of $N_{\phi}(f)$ is called the  $\phi$-principal Newton polygon of $f(x)$ and well denoted by $\npp{f}$. The length of $\npp{f}$ is $ l(\npp{f}) = \nu_{\overline{\ph}}(\overline{f(x)})$;  the highest power of $\phi$ dividing $f(x)$ modulo $p$.\\
Let $\mathbb{F}_{\phi}$ be the finite field   $ \mathbb{Z}[x]\textfractionsolidus(p,\phi (x)) \simeq \mathbb{F}_p[x]\textfractionsolidus (\overline{\ph}) $ (note that if $deg(\ph)=1$, then $\F_{\ph} \simeq \F_p  $).
We attach to  any abscissa $ 0 \leq i \leq  l(\npp{f})$ the following residual coefficient $ c_i \in  \mathbb{F}_{\phi}$ as follows:

$$c_{i}=
\left
\{\begin{array}{ll} 0,& \mbox{if }  (i , u_i ) \, \text{ lies  strictly  above }  \ \npp{f},\\
\left(\dfrac{a_i(x)}{p^{u_i}}\right)
\,\,
\md{(p,\phi(x))},&\mbox{if } \  (i , u_i ) \, \text{lies on } \npp{f}. 
\end{array}
\right.$$ Now, let $S$ one of the sides of $\npp{f}$ and $\lambda = - \frac{h}{e} $ be its slope, where $e$ and $h$ are two positive coprime integers. The length of $S$, denoted $l(S)$ is the length of its  projection to the horizontal axis. The degree of $S$ is $ d = d(S) = \frac{l(S)}{e}$; it is equal to the the number of segments into which the integral lattice divides $S$. More precisely, if $ (s , u_s)$ is the initial point of $S$, then the points with integer coordinates  lying in $S$ are exactly $  (s , u_s) ,\ (s+e , u_s - h) , \ldots, (s+de , u_s - dh)$. We attach to $S$ the following residual polynomial defined by $ R_{\l}(f)(y) = c_s + c_{s+e}y+ \cdots + c_{s+(d-1)e}y^{d- 1}+ c_{s+de}y^d \in \mathbb{F}_{\phi}[y]$.  As defined in \cite[Def. 1.3]{EMN},   the $\ph$-index of $f(x)$, denoted by $ind_{\ph}(f)$, is  deg$(\ph)$ times the number of points with natural integer coordinates that lie below or on the polygon $\npp{f}$, strictly above the horizontal axis  and strictly beyond the vertical axis (see FIGURE $1$). We say that the polynomial $f(x)$ is $\phi$-regular with respect to $p$ if for each side $S$ of $\npp{f}$, the associated residual polynomial $ R_{\l}(f)(y)$ is separable in $\mathbb{F}_\phi[y]$. The polynomial $f(x)$ is said to be $p$-regular if $f(x)$ is $\phi_i$-regular for every $ 1 \leq i \leq t$ , where $\overline{f(x)}=\prod_{i=1}^t\overline{\ph_i}^{l_i}$is the factorization  of $\overline{f(x)}$ into a product  of powers of distinct irreducible polynomials in  $\mathbb{F}_p [x]$. For every $i=1,\dots,t$, let  $N_{\ph_i}^+(f)=S_{i1}+\dots+S_{ir_i}$ and for every {$j=1,\dots, r_i$},  let $R_{\l_{ij}}(f)(y)=\prod_{s=1}^{s_{ij}}\psi_{ijs}^{n_{ijs}}(y)$ be the factorization of $R_{\l_{ij}}(f)(y)$ in $\F_{\ph_i}[y]$. 
By theorem of the product, theorem of the polygon and theorem of the residual polynomial (see \cite[Theorems 1.13, 1.15 and 1.19]{Nar}),    we have the following  theorem of Ore, which plays a significant role in the proof of our theorems (see \cite[Theorem 3.9]{El},  \cite[Theorem 1.7 and Theorem 1.9]{EMN},  \cite{MN92} and \cite{O}):
\begin{theorem}\label{ore} (Theorem of Ore)
	\begin{enumerate}
		\item
		$\nu_p(ind(f))=\nu_p((\Z_K:\Z[\th]))\ge \sum_{i=1}^t ind_{\ph_i}(f)$ and  equality holds if $f(x)$ is $p$-regular;  every $n_{ijs}= 1$.
		\item
		If  $f(x)$ is $p$-regular, then 
		$$p\Z_K=\prod_{i=1}^t\prod_{j=1}^{r_i}
		\prod_{s=1}^{s_{ij}}\pF^{e_{ij}}_{ijs},$$ where $e_{ij}$ is the ramification index
		of the side $S_{ij}$ and $f_{ijs}=\mbox{deg}(\ph_i)\times \mbox{deg}(\psi_{ijs})$ is the residue degree of $\mathfrak{p}_{ijs}$ over $p$.
	\end{enumerate}
\end{theorem}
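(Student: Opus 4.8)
The plan is to deduce both assertions from Hensel's correspondence together with the three foundational results of the Ore--Montes theory already at our disposal: the theorem of the product, the theorem of the polygon, and the theorem of the residual polynomial \cite[Theorems 1.13, 1.15 and 1.19]{Nar}. The starting point is Hensel's observation \cite{Hecor} that the prime ideals of $\Z_K$ lying above $p$ are in bijection with the monic irreducible factors of $f$ in $\Q_p[x]$; hence it suffices to recover this factorization, with its ramification and residue data, from the $\ph_i$-Newton polygons. Writing $\overline{f}=\prod_{i=1}^t\overline{\ph_i}^{l_i}$, I would first apply the theorem of the product to obtain a factorization $f=\prod_{i=1}^t F_i$ in $\Z_p[x]$ with $\overline{F_i}=\overline{\ph_i}^{l_i}$. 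Because the factors $F_{i'}$ with $i'\neq i$ reduce to polynomials prime to $\overline{\ph_i}$, they do not affect the $\ph_i$-adic valuations contributing to the principal polygon, so $N_{\ph_i}^+(F_i)=\npp{f}$ and the residual polynomials of $F_i$ and of $f$ coincide; the parts attached to distinct $\ph_i$ are thereby separated and may be analysed independently.

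Next I would fix one $\ph_i$ and stratify $F_i$ by slope. By the theorem of the polygon, each side $S_{ij}$ of $\npp{f}$ of slope $\l_{ij}=-h_{ij}/e_{ij}$ contributes a factor $F_{ij}$ of $F_i$ whose Newton polygon is one-sided of that slope and whose length equals $l(S_{ij})$, and $e_{ij}$ is forced to divide the ramification index of every prime arising from $S_{ij}$. Refining once more by the theorem of the residual polynomial, the factorization $R_{\l_{ij}}(f)(y)=\prod_{s=1}^{s_{ij}}\psi_{ijs}^{n_{ijs}}(y)$ in $\F_{\ph_i}[y]$ lifts to a factorization of $F_{ij}$ into blocks, the block attached to $\psi_{ijs}$ having degree $e_{ij}\,\deg(\ph_i)\,\deg(\psi_{ijs})\,n_{ijs}$. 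When $f$ is $p$-regular every $n_{ijs}=1$, so each block is already irreducible over $\Q_p$, i.e. a single prime $\pF_{ijs}$ with ramification index $e_{ij}$ and residue degree $\deg(\ph_i)\deg(\psi_{ijs})$; assembling over $i$, $j$, $s$ produces the displayed factorization of $p\Z_K$ in part (2).

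For part (1) I would invoke Ore's theorem of the index. The equality $\nu_p(\mathrm{ind}(f))=\nu_p((\Z_K:\Z[\th]))$ is simply the $p$-part of the index--discriminant relation (\ref{indexdiscrininant}), while the total $p$-index decomposes, by the theorem of the product, into contributions indexed by the $\ph_i$, each of which is bounded below by the number of lattice points lying below or on the corresponding principal polygon, that is, by $\mathrm{ind}_{\ph_i}(f)$. This gives $\nu_p((\Z_K:\Z[\th]))\ge\sum_{i=1}^t\mathrm{ind}_{\ph_i}(f)$. The defect between the two sides is precisely the index absorbed by any inseparability in the residual polynomials, i.e. by factors $\psi_{ijs}$ with $n_{ijs}>1$ whose resolution would require a second-order Newton polygon; under $p$-regularity no such factors occur, and equality holds.

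The main obstacle is entirely contained in the residual polynomial theorem and in the equality case of the index theorem: one must show that separability of every $R_{\l_{ij}}(f)$ already certifies that the first-order data determine the splitting completely, with no coincidences among the local factors left unresolved. Establishing this rests on Krasner's lemma and a careful valuation-theoretic comparison of the Newton polygon of $f$ with those of its $\Q_p$-irreducible factors --- exactly the content we are importing from \cite{Nar} --- and it is here, rather than in the bookkeeping of slopes, degrees and lattice points, that the genuine difficulty lies.
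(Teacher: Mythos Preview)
Your sketch is correct and matches the approach the paper itself indicates: the paper does not actually prove Theorem~\ref{ore} but merely records it as a consequence of the theorem of the product, the theorem of the polygon, and the theorem of the residual polynomial \cite[Theorems 1.13, 1.15 and 1.19]{Nar}, together with the index theorem, citing \cite[Theorem 3.9]{El}, \cite[Theorem 1.7 and 1.9]{EMN}, \cite{MN92} and \cite{O} for the details. Your outline---Hensel's correspondence, then successive refinement by product, polygon, and residual polynomial, with the index inequality coming from the lattice-point count and equality in the $p$-regular case---is exactly the route those references take, so there is nothing to compare.
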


\begin{example} Consider the monic  irreducible polynomial $f(x)=x^4-4x^3+12x^2-8x+95$. Then $f(x)\equiv \ph^4 \md 2 $, where $\ph = x-1$. The $\ph$-adic development of $f(x)$ is $$f(x)= \ph^4+ 6\ph^2+8\ph+96.$$ Thus $\npp{f} = S_1+S_2$ with respect to $\n_2$ has two sides, with $d(S_1) =2$, $d(S_2) = 1$, $\l_1 = -2$ and $\l_2 = \frac{-1}{2}$ (see FIGURE $1$).

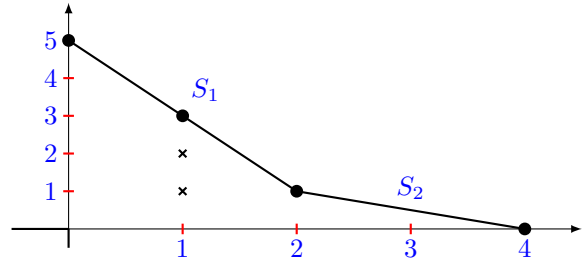
\begin{figure}[htbp]

	\centering
	
	\begin{tikzpicture}[x=1.5cm,y=0.5cm]
	\draw[latex-latex] (0,6) -- (0,0) -- (4.5,0) ;

	\draw[thick] (0,0) -- (-0.5,0);
	\draw[thick] (0,0) -- (0,-0.5);
	
	\draw[thick,red] (1,-2pt) -- (1,2pt);
	\draw[thick,red] (2,-2pt) -- (2,2pt);
	\draw[thick,red] (3,-2pt) -- (3,2pt);
	\draw[thick,red] (4,-2pt) -- (4,2pt);
	\draw[thick,red] (-2pt,1) -- (2pt,1);
	\draw[thick,red] (-2pt,2) -- (2pt,2);
	\draw[thick,red] (-2pt,3) -- (2pt,3);
	\draw[thick,red] (-2pt,4) -- (2pt,4);	
	\draw[thick,red] (-2pt,5) -- (2pt,5);	
	\node at (1,0) [below ,blue]{\footnotesize  $1$};
	\node at (2,0) [below ,blue]{\footnotesize $2$};
	\node at (3,0) [below ,blue]{\footnotesize  $3$};
	\node at (4,0) [below ,blue]{\footnotesize  $4$};
	\node at (0,1) [left ,blue]{\footnotesize  $1$};
	\node at (0,2) [left ,blue]{\footnotesize  $2$};
	\node at (0,3) [left ,blue]{\footnotesize  $3$};
	\node at (0,4) [left ,blue]{\footnotesize  $4$};
	\node at (0,5) [left ,blue]{\footnotesize  $5$};
	\draw[thick, mark = *] plot coordinates{(0,5) (1,3) (2,1) (4,0)};
	\draw[thick, only marks, mark=x] plot coordinates{(1,1) (1,2)  (2,1)     };
	\node at (1.2,3.1) [above  ,blue]{\footnotesize $S_{1}$};
	\node at (3,0.5) [above   ,blue]{\footnotesize $S_{2}$};
	\end{tikzpicture}
	\caption{ The $\ph$-principal Newton polygon  $\npp{f}$ with respect to $\n_2$.}
\end{figure}The residual polynomials attached to the sides of $\npp{f}$ are
$R_{\l_1}(f)(y) = 1 + y + y^2$ and $R_{\l_2}(f)(y)=   1 +y $, which are irreducible polynomials in $\F_{\ph}[y] \simeq \F_2[y] $. Thus $f(x)$ is $\ph$-regular, hence it is $2$-regular. By  Theorem \ref{ore},  $\nu_2(ind(f))=\nu_2((\Z_K:\Z[\a])) = ind_{\ph}(f) = \deg(\ph) \times  4 = 4 $ and $2\Z_K = \pF_1 \pF_2^2  $, with respective residue degrees $ f_1=2 $ and $f_2  = 1 $.  
\end{example}

In order to prove  Theorem of the product, J. Gu\`{a}rdia, J. Montes and E. Nart introduced in \cite{Nar} the notion of $\ph$-admissible development. In this paper we will use these technique in order to treat some special cases when the $\ph$-adic development of a given polynomial  $ f(x)$ is not obvious. Let
\begin{equation}\label{admissdev}
	f(x) = \sum_{j=0}^{n}A_j(x) \ph(x)^j,  \, \,  A_j(x) \in \mathbb{Z}_p[x],
\end{equation}
be a  $\ph$-development of $f(x)$, not necessarily the $\ph-$adic one. Take $ \omega_j = \nu_p(A_j(x))$, for all $ 0 \leq j \leq n$. Let $N$ be the principal Newton polygon of the set of points $\{ ( j, \omega_j ) \, \, | \,  0 \leq j \leq n, \omega_j \neq \infty \}$. To any $ 0 \leq j \leq n$, we attach a residual coefficient as follow :  $$c^{'}_{j}=
\left
\{\begin{array}{ll} 0,& \mbox{ if } (j,{\it \omega_j}) \mbox{ lies strictly
	above } N
,\\
\left(\dfrac{A_{j}(x)}{p^{\omega_j}}\right)
\,\,
\md{(p,\phi(x))},&\mbox{ if }(j,{\it \omega_j}) \mbox{ lies on }N.
\end{array}
\right.$$
Moreover,  for any side $S$ of $N$ with slope $\l$, we  define the residual polynomial associated to $S$ and noted $R_{\l}^{'}(f)(y)$ (similar to the residual polynomial $R_{\l}(F)(y)$ defined from the $\ph$-adic development). We say that a $ \ph$-development  (\ref{admissdev}) of $f(x)$ is admissible if $ c^{'}_{j} \neq 0 $ for each abscissa $j$ of a vertex of $N$. Note that   
$ c^{'}_{j} \neq  0 $ if and only if $ \overline{\ph(x)} \nmid \overline{\left(\dfrac{A_{j}(x)}{p^{\omega_j}}\right)}.$  For more details, see \cite{Nar}.  
\begin{lemma} \label{admiss}$($\cite[Lemma $1.12$]{Nar}$)$ \\If a $ \ph$-development of $f(x)$ is admissible, then $ \npp {f} = N$ and $ c^{'}_j = c_j $. In particular, for any segment $S$ of $N$ with slope $\l$ we have $R_{\l}^{'}(f)(y) =R_{\l}(f)(y)$.  
\end{lemma}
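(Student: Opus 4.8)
The plan is to relate the given admissible $\ph$-development $f(x)=\sum_{j=0}^{n}A_j(x)\ph(x)^j$ to the genuine $\ph$-adic development $f(x)=\sum_{i=0}^{n}a_i(x)\ph(x)^i$ by expanding each $A_j(x)$ itself in powers of $\ph(x)$: write $A_j(x)=\sum_{k\ge 0}a_j^{(k)}(x)\ph(x)^k$ with $\deg(a_j^{(k)})<\deg(\ph)$, so that collecting powers of $\ph$ gives $a_i(x)=\sum_{j\le i}a_j^{(i-j)}(x)$. Two elementary observations do all the work. First, $A_j(x)/p^{\omega_j}\in\Z_p[x]$ is primitive, hence its $\ph$-adic coefficients $a_j^{(k)}(x)/p^{\omega_j}\in\Z_p[x]$ are not all divisible by $p$; therefore $\nu_p(a_j^{(k)})\ge\omega_j$ for every $k$, with equality for at least one $k$. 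Second, reduction modulo $\ph$ isolates the bottom coefficient: $A_j(x)\equiv a_j^{(0)}(x)\pmod{\ph(x)}$ with $\deg(a_j^{(0)})<\deg(\ph)$, so $c'_j=\overline{a_j^{(0)}/p^{\omega_j}}$ in $\F_{\ph}$, and in particular $c'_j\ne 0$ if and only if $\nu_p(a_j^{(0)})=\omega_j$. It is precisely in this form that admissibility will be used at the vertices of $N$.

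First I would show that every point $(i,u_i)$ lies on or above $N$, hence so does $\npp{f}$. Since $(j,\omega_j)$ lies on or above $N$ and $N$ is strictly decreasing on its domain, for $j\le i$ we have $\nu_p(a_j^{(i-j)})\ge\omega_j\ge N(j)\ge N(i)$, whence $u_i=\nu_p(a_i)\ge N(i)$; as $\npp{f}$ is the negative‑slope part of the lower convex hull of the points $(i,u_i)$ and $N$ is convex, $\npp{f}$ lies on or above $N$ as well. Next I would pin down the vertices of $N$. Let $(j_t,\omega_{j_t})$ be a vertex of $N$; strict monotonicity gives $\omega_j\ge N(j)>N(j_t)=\omega_{j_t}$ for all $j<j_t$, so $\nu_p\big(\sum_{j<j_t}a_j^{(j_t-j)}\big)>\omega_{j_t}$, while admissibility at $j_t$ gives $\nu_p(a_{j_t}^{(0)})=\omega_{j_t}$. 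Hence $\nu_p(a_{j_t})=\omega_{j_t}$, i.e. $u_{j_t}=\omega_{j_t}$, and $a_{j_t}/p^{\omega_{j_t}}\equiv a_{j_t}^{(0)}/p^{\omega_{j_t}}\pmod p$, so $c_{j_t}=c'_{j_t}$. Thus $\npp{f}$ is a convex polygon lying on or above $N$ and passing through every vertex of $N$; since a convex broken line lying above a segment and meeting it at both endpoints must coincide with that segment (otherwise its slopes would decrease across an interior vertex), applying this on each side of $N$ yields $\npp{f}=N$.

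It remains to check $c_i=c'_i$ at the abscissae $i$ on a side of $N=\npp{f}$ that are not vertices. The same domination applies: for $j<i$ we still have $\omega_j\ge N(j)>N(i)$, so in $a_i=a_i^{(0)}+\sum_{j<i}a_j^{(i-j)}$ the tail has $p$-valuation strictly larger than $N(i)$. If $\nu_p(a_i^{(0)})=\omega_i=N(i)$, then $u_i=\omega_i$ and $c_i=\overline{a_i^{(0)}/p^{\omega_i}}=c'_i$; if instead $\nu_p(a_i^{(0)})>N(i)$ — which includes the case $(i,\omega_i)$ strictly above $N$, where $c'_i=0$ — then $u_i>N(i)$, so $(i,u_i)$ lies strictly above $\npp{f}$ and $c_i=0=c'_i$. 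Since for each negative slope $\l$ the residual polynomial $R_\l(f)(y)$ is built from the $c_i$ along the side of $\npp{f}$ of slope $\l$ and $R'_\l(f)(y)$ from the $c'_i$ along the now‑identical side of $N$, we conclude $R_\l(f)(y)=R'_\l(f)(y)$, which finishes the proof.

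The main obstacle is the bookkeeping away from the vertices: admissibility is hypothesised only at the vertices of $N$, so at an interior lattice point of a side one cannot invoke it directly, and must instead use the strict monotonicity of the principal polygon to argue that the term $a_i^{(0)}$ either strictly dominates every other contribution to $a_i$ (giving equal, nonzero residual coefficients) or is itself of too‑high valuation (giving $c_i=c'_i=0$). A subsidiary technical point that must be handled carefully is that the $\ph$-adic coefficients of the primitive polynomial $A_j/p^{\omega_j}$ are not all divisible by $p$ and that $A_j\bmod\ph=a_j^{(0)}$, since this is exactly what identifies $c'_j$ with $\overline{a_j^{(0)}/p^{\omega_j}}$ and links the two notions of residual coefficient.
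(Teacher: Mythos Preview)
The paper does not prove this lemma; it is merely quoted from Gu\`ardia--Montes--Nart \cite[Lemma~1.12]{Nar} without argument. Your proof is correct and is essentially the standard one behind that citation: re-expand each $A_j$ in its own $\phi$-adic development $A_j=\sum_{k\ge 0}a_j^{(k)}\phi^k$, so that the genuine $\phi$-adic coefficient is $a_i=\sum_{j\le i}a_j^{(i-j)}$; the bound $\nu_p(a_j^{(k)})\ge\omega_j$ together with the strict decrease of the principal polygon $N$ shows that only the term $a_i^{(0)}$ can contribute at height $N(i)$, admissibility at each vertex $j_t$ then forces $u_{j_t}=\omega_{j_t}$, and convexity pins $\npp{f}$ to $N$ side by side. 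The residual-coefficient comparison drops out of the same domination, exactly as you wrote.

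The only point you leave implicit is that the two principal polygons share the same right endpoint (equivalently, the same length $l(\npp{f})=l(N)$). In the paper's setting $f$ is monic and both the $\phi$-adic and the admissible development terminate with a unit leading coefficient, so both polygons end on the horizontal axis; your vertex argument applied at the rightmost vertex of $N$ then gives $u_{j^*}=\omega_{j^*}=0$, which prevents $\npp{f}$ from extending beyond $j^*$. It would do no harm to say this explicitly.
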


When the polynomial $f(x)$ is not $p$-regular; certain factors of $f(x)$  provided by certain factors of certain  residual polynomials $R_{\l_{ij}}(f)(y)$ are not irreducible in $\Q_p(x)$.  Montes,  	Nart and  Gu\'{a}rdia are recently  introduced an efficient algorithm  to factorize completely  the principal ideal  $p\Z_K$ (see \cite{ Nar, MN92}). They defined the Newton polygon of order $r$ and they proved an extension of the theorem of the product, theorem of the polygon, theorem of the residual polynomial  and theorem of index in order $r$. As we will use this algorithm in second order; $r=2$, we  shortly recall those concepts that we use throughout. Let $\ph$ be a monic irreducible factor of $f(x)$ modulo $p$.  Let $S$ be  a side of $N_1 = N^{+}_{\ph}(F)$, with slope $\lambda = - \frac{h}{e}$, with $h$ and $e$ are two coprime positive integers such that the associated residual polynomial  $\psi_1(y)=R_{\l}(f)(y)$ is of degree $f \ge 2$ and not separable in $\F_{\ph}[y]$. A type of order $2$ is a chain:
$$ (\ph(x); \lambda, \ph_2(x); \lambda_2, \psi_2(y)),$$ where  $\ph_2(x)$ is a monic irreducible polynomial in $\Z_p[x]$ of degree $m_2 = e \cdot f \cdot \deg(\ph)$, $\lambda_2$ is a negative rational number and $\psi_2(y) \in \F^2 = \F_{\ph}[y]/{(\psi_1(y))} $ such that
\begin{enumerate}
	\item  $N_1(\ph_2) = N^{+}_{\ph}(\ph_2)$ is one-sided with slope $\lambda$.
	\item The residual polynomial in  order $1$ of $\ph_2$;  $R_{\l}(\ph_2)(y) \simeq \psi_1(y)$ in  $\F_{\ph}[y]$.
	\item $\l_2$ is a slope of certain side of $\ph_2$-Newton polygon of second order (To be specified below) and $\psi_2(y) = R^2_{\l_2}(f)(y)$ is the associated residual polynomial of second order. 
\end{enumerate}The key polynomial $\ph_2$ induces a valuation  $\n^{2}_p$ on $\Q_p(x)$, called the augmented valuation of $\n_p$ of second order with respect to $\ph$ and $\lambda$. By  \cite[Proposition 2.7]{Nar}, If $P(x) \in \Z_p[x]$ such that  $P(x )= a_0(x)+ a_1(x) \ph(x) + \cdots + a_l(x) {\ph(x)}^l$. Then $\n^{2}_p(P(x)) = e \times  \min\limits_{0 \le j \le l}\{ \n_p(a_i(x)) + i (\n_p(\ph(x))+ |\lambda|)\}$, in particular $\n^{2}_p(\ph_2(x)) = e \cdot f \cdot \n_p(\ph(x))$. Let $f(x)= a_0(x)+ a_1(x) \ph_2(x) + \cdots + a_t(x) {\ph_2(x)}^t$ be the $\ph_2$-adic development of $f(x)$ and let $\mu_i = \n^{2}_p(a_i(x) \ph_2(x)^i) $ for every $0 \le i \le t$. The $\ph_2$-Newton polygon of $f(x)$  of second order with respect $\n_p^2$ is the lower boundary of the convex envelope of the set of points   $ \{  ( i , \mu_i) \, , 0 \le i \le t  \}$ in the Euclidean plane, which we denote   by $N_{2} (f)$.  We will use theorem of the polygon and theorem of residual polynomial in second order (see \cite[Theorem 3.1 and 3.4]{Nar} for more general treatment).

For the determination of certain Newton polygons, we will need to evaluate the $p$-adic valuation of the binomial coefficient $\dbinom{p^r}{j} $, this is the object of the following well knowing lemma. For the proof,  see for example \cite{BFC}. 
\begin{lemma} \label{binomial}
	Let $p$ be a rational prime integer and $r$ be a positive integer. Then\begin{eqnarray*}
	\nu_p\left(\binom{p^r}{j}\right)  =  r - \nu_p(j) 
	\end{eqnarray*}for any integer $j= 1,\dots,p^r-1 $. 
\end{lemma}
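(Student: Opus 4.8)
The plan is to reduce the evaluation of $\nu_p\binom{p^r}{j}$ to the elementary fact that $p$ does not divide $\binom{p^r-1}{j-1}$, and then to establish that divisibility fact directly. For every $j$ with $1\le j\le p^r-1$ one has the absorption identity $j\binom{p^r}{j}=p^r\binom{p^r-1}{j-1}$, which is immediate upon writing both sides as $\dfrac{p^r!}{(j-1)!\,(p^r-j)!}$. Taking $p$-adic valuations gives
\[
\nu_p(j)+\nu_p\!\left(\binom{p^r}{j}\right)=r+\nu_p\!\left(\binom{p^r-1}{j-1}\right),
\]
so the lemma is equivalent to the claim that $\nu_p\binom{p^r-1}{k}=0$ for every $k$ with $0\le k\le p^r-1$.

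To prove this claim I would invoke Lucas' theorem. The base-$p$ expansion of $p^r-1$ is $(p-1,p-1,\dots,p-1)$, consisting of $r$ digits all equal to $p-1$; hence, writing $k=\sum_{i=0}^{r-1}k_ip^i$ with $0\le k_i\le p-1$, Lucas' theorem gives $\binom{p^r-1}{k}\equiv\prod_{i=0}^{r-1}\binom{p-1}{k_i}\pmod p$. Each factor is a unit modulo $p$, since $\binom{p-1}{t}=\prod_{s=1}^{t}\frac{p-s}{s}\equiv\prod_{s=1}^{t}\frac{-s}{s}=(-1)^t\pmod p$ for $0\le t\le p-1$. Therefore the product is nonzero modulo $p$, which proves the claim and with it the lemma.

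The only step requiring a little care is the bookkeeping with the range of $j$: one must check that $1\le j\le p^r-1$ forces $0\le j-1\le p^r-1$, so that $\binom{p^r-1}{j-1}$ genuinely falls under the argument above and has at most $r$ base-$p$ digits; beyond that the proof is routine, so there is no real obstacle. If one prefers to avoid quoting Lucas' theorem, the same conclusion follows from Legendre's formula $\nu_p(m!)=\frac{m-s_p(m)}{p-1}$, where $s_p(\cdot)$ denotes the base-$p$ digit sum: it yields $\nu_p\binom{p^r}{j}=\frac{s_p(j)+s_p(p^r-j)-1}{p-1}$, and writing $j=p^v m$ with $v=\nu_p(j)$ and $p\nmid m$, one reads off from the borrow pattern of the base-$p$ subtraction $p^{r-v}-m$ that $s_p(j)+s_p(p^r-j)=(r-v)(p-1)+1$, whence $\nu_p\binom{p^r}{j}=r-v$. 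Equivalently, Kummer's theorem reduces the problem to counting the carries occurring when $j$ is added to $p^r-j$ in base $p$, and one checks that there are exactly $r-\nu_p(j)$ of them.
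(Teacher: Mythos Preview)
Your proof is correct and self-contained. The absorption identity $j\binom{p^r}{j}=p^r\binom{p^r-1}{j-1}$ cleanly reduces the statement to showing $p\nmid\binom{p^r-1}{k}$, and your Lucas-theorem argument for that is valid; the alternative via Legendre's formula is also correctly carried out (the digit-sum computation $s_p(j)+s_p(p^r-j)=(r-\nu_p(j))(p-1)+1$ checks out once one tracks the borrow pattern you describe).

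By way of comparison: the paper does not actually prove this lemma but simply refers the reader to an external source (\cite{BFC}). So your write-up goes further than the paper itself, supplying an argument where the paper only gives a citation. The Lucas/Kummer/Legendre approaches you present are the standard ones and are almost certainly what lies behind the cited reference, so there is no substantive divergence in method---only in completeness.
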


 The following lemma gives a sufficient  condition for a rational prime integer $p$ to be a prime common index divisor of the field $K$. For the proof, see \cite{R} and \cite[Theorems 4.33 and 4.34 ]{Na}.
\begin{lemma} \label{comindex}
	Let  $p$ be  rational prime integer and $K$ be a number field. For every positive integer $m$, let $P_m$ be the number of distinct prime ideals of $\Z_K$ lying above $p$ with residue degree $m$ and  $N_p(m)$ be the number of monic irreducible polynomials of  $\F_p[x]$ of degree $m$. If $ P_m > N_p(m)$ for some positive integer $m$, then $p$ is a prime common index divisor of $K$.
\end{lemma}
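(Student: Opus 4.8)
The plan is to argue by contraposition, showing that if $p$ fails to be a common index divisor then necessarily $P_m \le N_p(m)$ for every positive integer $m$. So suppose $p \nmid i(K)$. Since $i(K)$ is defined in (\ref{i(K)}) as the greatest common divisor of the indices $(\Z_K : \Z[\eta])$ over all $\eta \in \widehat{\Z}_K$, the assumption $p \nmid i(K)$ forces the existence of a single primitive integral element $\eta \in \widehat{\Z}_K$ with $p \nmid (\Z_K : \Z[\eta])$; otherwise $p$ would divide every such index and hence their gcd, contrary to $p \nmid i(K)$.

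The core of the argument is Dedekind's factorization theorem, recalled earlier in this section: whenever $p \nmid (\Z_K : \Z[\eta])$, the factorization of $p\Z_K$ into prime ideals mirrors the factorization of the minimal polynomial $P_\eta(x)$ modulo $p$. Writing $\overline{P_\eta}(x) = \prod_{i=1}^{g} \overline{g_i}(x)^{e_i}$ as a product of powers of pairwise distinct monic irreducible polynomials $\overline{g_i} \in \F_p[x]$, Dedekind's theorem yields $p\Z_K = \prod_{i=1}^{g} \pF_i^{e_i}$, where the prime ideals $\pF_i$ are pairwise distinct and the residue degree of $\pF_i$ over $p$ equals $\deg(g_i)$. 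Consequently the assignment $\pF_i \mapsto \overline{g_i}$ is an injection from the set of primes of $\Z_K$ lying above $p$ into the set of monic irreducible polynomials over $\F_p$, and it preserves degree (residue degree on the left matching polynomial degree on the right).

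Fixing a positive integer $m$ and restricting this injection to the primes of residue degree $m$, I would conclude that $P_m$, the number of such primes, is at most the number of distinct monic irreducible polynomials of degree $m$ in $\F_p[x]$, namely $N_p(m)$. Since this holds for every $m$, we obtain $P_m \le N_p(m)$ for all $m$, which is precisely the negation of the hypothesis. By contraposition, if $P_m > N_p(m)$ for some $m$, then no $\eta \in \widehat{\Z}_K$ can satisfy $p \nmid (\Z_K : \Z[\eta])$; equivalently $p$ divides every index $(\Z_K : \Z[\eta])$, whence $p \mid i(K)$, so $p$ is a prime common index divisor of $K$. The only delicate point is the distinctness and degree-preservation in Dedekind's correspondence, but both are already part of the statement of his theorem as cited, so the remaining content reduces to a direct pigeonhole count.
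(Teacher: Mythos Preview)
Your argument is correct and is precisely the standard proof of this classical criterion: contraposition plus Dedekind's factorization theorem gives a degree-preserving injection from primes above $p$ into monic irreducibles over $\F_p$, and then pigeonhole finishes. The paper does not supply its own proof of Lemma~\ref{comindex} but simply refers the reader to \cite{R} and \cite[Theorems 4.33 and 4.34]{Na}; what you have written is essentially the argument found in those references, so there is nothing to compare beyond noting that you have spelled out what the paper leaves to citation.
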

To apply the last lemma, one needs to know the number $N_p(m)$ of monic irreducible polynomials over $\F_p$ of degree $m$ which  is given by the following proposition. 
\begin{proposition}$($\cite[Proposition 4.35]{Na}$)$
	The number of  monic irreducible polynomials of degree $m$ in $\F_p[x]$ is given by:
	\begin{eqnarray*}
		N_p(m) = \frac{1}{m} \sum_{d \mid m} \mu (d) p^{\frac{m}{d}},
	\end{eqnarray*}where $\mu$ is the M\"{o}ubius function. 
\end{proposition}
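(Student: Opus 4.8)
The plan is to reduce the formula to a single polynomial identity over $\F_p$ and then invoke M\"obius inversion. The starting point will be the classical factorization
\begin{equation*}
x^{p^m}-x = \prod_{d\mid m}\ \prod_{\substack{g \text{ monic irreducible}\\ \deg g = d}} g(x)
\end{equation*}
in $\F_p[x]$. To justify it, I would first recall that $\F_{p^m}$ is the splitting field of $x^{p^m}-x$ and that this polynomial is separable, since its formal derivative is $-1$; hence $x^{p^m}-x$ is a product of distinct monic irreducible factors, each occurring with multiplicity one. Next I would show that a monic irreducible $g$ of degree $d$ divides $x^{p^m}-x$ if and only if $d\mid m$: a root $\a$ of $g$ generates $\F_p(\a)\cong\F_{p^d}$, and $\a$ satisfies $\a^{p^m}=\a$ exactly when $\F_{p^d}\subseteq\F_{p^m}$, which holds precisely when $d\mid m$. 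Combining separability with this characterization gives the displayed identity.

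Comparing degrees on both sides of the identity will yield the counting relation
\begin{equation*}
p^m = \sum_{d\mid m} d\,N_p(d),
\end{equation*}
since there are exactly $N_p(d)$ monic irreducibles of degree $d$, each contributing $d$ to the total degree. This is the arithmetic heart of the argument.

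Finally, I would apply M\"obius inversion. Setting $G(d)=d\,N_p(d)$ and $F(m)=p^m$, the relation above reads $F(m)=\sum_{d\mid m}G(d)$, so the M\"obius inversion formula gives $G(m)=\sum_{d\mid m}\mu(d)\,F(m/d)$, that is, $m\,N_p(m)=\sum_{d\mid m}\mu(d)\,p^{m/d}$. Dividing by $m$ produces the stated formula. The main obstacle, such as it is, lies in the first step: carefully verifying the product factorization, in particular the separability of $x^{p^m}-x$ and the exact characterization of which irreducibles divide it, since everything afterwards is a mechanical degree count followed by an appeal to the standard inversion formula.
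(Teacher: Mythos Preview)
Your argument is the standard and correct proof of this classical formula: factor $x^{p^m}-x$ as the product of all monic irreducibles of degree dividing $m$, compare degrees to get $p^m=\sum_{d\mid m}d\,N_p(d)$, and apply M\"obius inversion. Each step is justified as you describe.

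Note, however, that the paper does not supply its own proof of this proposition: it is merely quoted from \cite[Proposition 4.35]{Na} and used as an auxiliary tool. So there is no ``paper's proof'' to compare against; your proposal is exactly the textbook argument one finds in Narkiewicz (or any standard reference), and it is complete as written.
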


\section{Proofs of main results}  
{\begin{proof}[Proof of Theorem \ref{mono}]\
		\\ Since $F(x) \equiv \ph^{p^r} \md p$, where $\ph = x$ and $\nph{F}=S$  has  a single side of degree $1$ (because gcd$(u,p^r)=1$, we conclude that $R_{l}(F)(y)$ is irreducible over $\F_{ \ph}$, and so  $F(x)$ is irreducible over $\Q_p$. Let $L=\Q_p(\th)$ and $K=\Q_p(\th)$. Since $\Q_p$ is a Henselianfield, there is a unique  valuation $\om$  of $L$ extending $\nu_p$.
		  Let  $(x,y)\in \Z^2$  be the unique solution of the diophantine equation $xu-yp^r=1$ with $0\le x <p^r$ and $\eta=\frac{\th^x}{p^y}$. Let us show that $\eta\in \Z_K$ and $\Z_K=\Z[\eta]$. First, by definition, $\eta\in K$. By \cite[Corollary 3.1.4]{En}, in order to show that $\eta\in \Z_K$, we need to show that $\om(\eta)\ge 0$. Since $\nph{F}=S$ has  a single side of slope $-u/p^r$, we conclude that $\om(\th)=u/p^r$, and so $\om(\eta)=x\frac{u}{p^r}-y=\frac{xu-yp^r}{p^r}=\frac{1}{p^r}$. Since $x$ and $p^r$ are coprime, we conclude that $K=\Q(\eta)$. Let $g(x)$ be the minimal polynomial of $\eta$ over $\Q$. By the formula relating roots and coefficients of a monic polynomial, we conclude that $g(x)=x^{p^r}+\sum_{i=1}^{p^r}(-1)^is_ix^{p^r-i}$, where $s_i=\displaystyle\sum_{k_1<\dots<k_i}\eta_{k_1}\cdots\eta_{k_i}$, where $\eta_{1},\dots, \eta_{p^r}$ are the $\Q_p$-conjugates of $\eta$. Since there is a unique valuation extending $\nu_p$ to any algebraic extension of $\Q_p$, we conclude that $\om(\eta_i)=u/p^r$ for every $i=1,\dots,p^r$. Thus $\nu_p(s_{p^r})\om(\eta_{1}\cdots\eta_{p^r}=p^r\times 1/p^r=1$ and $\nu_p(s_{p^r})\ge i/p^r$ for every $i=1,\dots, p^r-1$. That means that $g(x)$ is a $p$-Eseinstein polynomial. Hence $p$ does not divide the index $(\Z_K:\Z[\eta])$. As by hypothesis $p$ is the unique positive prime integer such that $p^2$ divides $D(\th)$ and by definition of $\eta$, $p$ is the unique positive prime integer candidate to divide  $(\Z[\th]:\Z[\eta])$, we conclude that for every prime integer $q$, $q$ does not divide $(\Z_K:\Z[\eta])$, which means that $\Z_K=\Z[\eta]$.
\end{proof}}
In every case, we prove that $\Z_K$ has no power integral basis by finding an adequate rational prime integer $p$ which is a common index divisor of $K$. For this reason, in view of Lemma \ref{comindex},  it suffice that the prime ideal factorization of  $p\Z_K$  satisfies  the inequality $ P_m > N_p(m)$ for some positive integer $m$.
As proved in \cite{BFC}, we will use  frequently without indicating the fact that when a rational prime integer $p$ does not divide a rational integer $b$, then  $\n_p((-b)^{p^k}+b)=\n_p(b^{p-1}-1)$ for every positive rational integer $k$.
	\begin{proof}[Proof of Theorem \ref{dp^r}]\
		\\
	Since $p\mid a$ and $p\nmid b$, we have $F(x) \equiv \ph^{p^r} \md p$, where $\ph = x+b$. Write 
	\begin{eqnarray}\label{devp^r}
		F(x)&=& (x+b-b)^{p^r}+ax+b=(\ph-b)^{p^r}+a(\ph - b )+b \nonumber \\
		&=& \ph^{p^r}+ \sum_{j=1}^{p^r-1} (-1)^{j+1} \binom{p^r}{j}  b^{p^r-j}\ph^j+ a(\ph - b) +b +(-b)^{p^r} \nonumber \\
			&=& \ph^{p^r}+ \sum_{j=2}^{p^r-1}(-1)^{j+1}\binom{p^r}{j} b^{p^r-j}\ph^j+ (a+p^r \cdot b^{p^r-1} )\ph +b +(-b)^{p^r}-ab.
	\end{eqnarray}	
	Since  $a \equiv 0 \md{p^{p+1}}$, $b^{p-1}\equiv 1 \md{p^{p+1}}$, and $r \ge p$, we conclude that $ \mu = \n_p(a+ p^r \cdot b^{p^r-1}) \ge \min(r,p+1)$ and $\n =\n_p(b +(-b)^{p^r}-ab) \ge p+1$. By Lemma \ref{binomial} and the $\ph$-adic development (\ref{devp^r})
 of $F(x)$, if  $\mu > \n $, then the $\ph$-principal Newton polygon of $F(x)$ with respect  to $\n_p$,  $\npp{F}= S_1+ \cdots +  S_{t-(p-1)}+ \cdots + S_t$ has $t$ sides of degree $1$ each with $t \ge p+1 $; the segment $S_{t-k}$ is the segment joining the points  $(p^{r-k-1},k+1)$ and $(p^{r-k},k)$ with  slopes $\l_{t-k} =\frac{-1}{(p-1)p^{r-k-1}}$, with ramification indices $e_{t-k} = (p-1)p^{r-k-1}$ for every $k = 0, 1, \ldots, p-1$, and the segment $S_1$ has $(0, \n)$ as the first  point  and $(1, \mu )$ as the end point with  ramification index $e_p = 1$ (see FIGURE 2). Thus $R_{\l_{j}}(F)(y)$ is irreducible over $\F_{\ph} \simeq \F_p$ for every $0 \le j \le t$ as it is of degree $1$. So, the polynomial $F(x)$ is $p$-regular. By Theorem \ref{ore}, $p\Z_K =  \prod_{k=0}^{p - 1 } \pF_k^{e_{t-k}} \cdot \pF_p^{e_p} \cdot  \aF$ for some non-zero ideal $\aF$ of $\Z_K$, with $f(\pF_k/p)= 1$ for every $0 \le k \le p$. If $\mu \ge \nu$. By Lemma \ref{binomial}, and (\ref{devp^r}),  $\npp{F}= S_1+ \cdots +  S_{t-p}+ \cdots + S_t$ has $t$ sides with $t \ge p+1 $, and the last $p+1$ sides are all of degree $1$; $S_{t-k}$ is the segment joining the points  $(p^{r-k-1},k+1)$ and $(p^{r-k},k)$ for every $k = 0, 1, \ldots, p$. By Theorem \ref{ore}, $p\Z_K =  \prod_{k=0}^{p } \pF_k^{e_{t-k}}\cdot  \bF$ for some  non-zero ideal   $\bF$  of $\Z_K$ . So, for the rational  prime  integer $p$, we have $P_1 \ge p+1 > N_p(1) = p$. Thus by Lemma \ref{comindex}, $p\mid i(K)$ and so $K$ is not monogenic.
 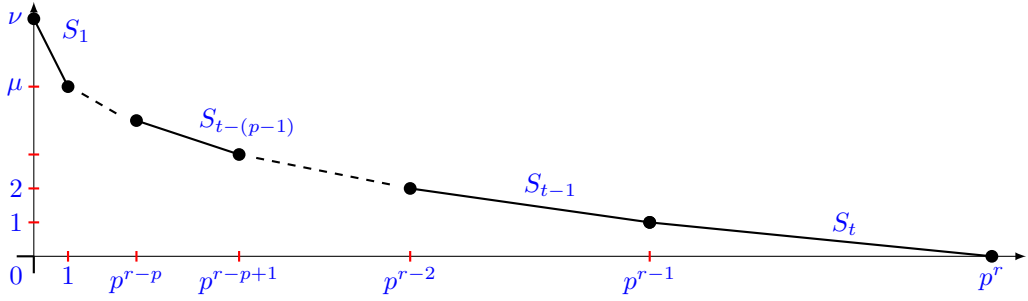
\begin{figure}[htbp] 
 	 \centering
 	\begin{tikzpicture}[x=0.45cm,y=0.45cm]
 	\draw[latex-latex] (0,7.5) -- (0,0) -- (29,0) ;
 	\draw[thick] (0,0) -- (-0.5,0);
 	\draw[thick] (0,0) -- (0,-0.5); 
 	\draw[thick,red] (1,-2pt) -- (1,2pt);
 	\draw[thick,red] (3,-2pt) -- (3,2pt);
 	\draw[thick,red] (6,-2pt) -- (6,2pt);
 	\draw[thick,red] (11,-2pt) -- (11,2pt);
 	\draw[thick,red] (18,-2pt) -- (18,2pt);
 		\draw[thick,red] (28,-2pt) -- (28,2pt);
 	\draw[thick,red] (-2pt,1) -- (2pt,1);
 	\draw[thick,red] (-2pt,2) -- (2pt,2);
 	\draw[thick,red] (-2pt,3) -- (2pt,3);
 	\draw[thick,red] (-2pt,5) -- (2pt,5);
 		\draw[thick,red] (-2pt,7) -- (2pt,7);
 	\node at (0,0) [below left,blue]{\footnotesize  $0$};
 	\node at (1,0) [below ,blue]{\footnotesize  $1$};
 	\node at (3,0) [below ,blue]{\footnotesize $p^{r-p}$};
 	\node at (6,0) [below ,blue]{\footnotesize  $p^{r-p+1}$};
 	\node at (11,0) [below ,blue]{\footnotesize  $p^{r-2}$};
 	\node at (18,0) [below ,blue]{\footnotesize  $p^{r-1}$};
 		\node at (28,0) [below ,blue]{\footnotesize  $p^{r}$};
 	\node at (0,1) [left ,blue]{\footnotesize  $1$};
 	\node at (0,2) [left ,blue]{\footnotesize  $2$};
 	\node at (0,5) [left ,blue]{\footnotesize  $\mu$};
 	 	\node at (0,7) [left ,blue]{\footnotesize  $\n$};
 	\draw[thick,mark=*] plot coordinates{(0,7) (1,5)};
 		\draw[thick,mark=*] plot coordinates{(3,4) (6,3)};
 				\draw[thick,mark=*] plot coordinates{(11,2) (18,1)};
 					\draw[thick,mark=*] plot coordinates{(28,0) (18,1)};
 						\draw[thick, dashed] plot coordinates{(1.4,4.8) (2.6,4.1) };
 							\draw[thick, dashed] plot coordinates{(6.5,2.9) (10.5,2.1) };
 	\node at (0.5,6) [above right  ,blue]{\footnotesize  $S_{1}$};
 	\node at (4.5,3.2) [above right  ,blue]{\footnotesize  $S_{t-(p-1)}$};
 	\node at (14,1.4) [above right  ,blue]{\footnotesize  $S_{t-1}$};
 	\node at (23,0.3) [above right  ,blue]{\footnotesize  $S_{t}$};
 	\end{tikzpicture}
 	\caption{    \large  $\npp{F}$ at odd prime $p$ with $a \equiv 0 \md{p^{p+1}}$, $b^{p-1}\equiv 1 \md{p^{p+1}}$, $r \ge p$ and $\n > \mu$ .\hspace{5cm}}
 \end{figure}
\end{proof}

   For the proof of Theorems \ref{dn1} and \ref{dn2} we will use the following technical result.  
\begin{lemma}\label{lemtech}
	Let $p$ be a rational prime integer and $f(x) \in \Z[x] $ be a polynomial which is separable modulo $p$. Let $g(x)$ be a monic irreducible factor of $\ol{f(x)}$ in $\F_p[x]$. Then, we can select a monic lifting $\ph \in \Z[x]$ of $g(x)$ such that  $ f(x)= \ph(x) U(x)+pT(x)$ for some polynomials $U(x)$, $T(x) \in \Z[X]$ such that $g(x) \nmid \overline{U(x)}$, $\overline{T(x)} \neq \overline{0}$ and $\deg(T(x)) < \deg(g(x))$. 
\end{lemma}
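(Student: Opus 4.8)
The plan is to use that $\ol{f(x)}$ is squarefree in $\F_p[x]$, so the irreducible factor $g$ occurs there with multiplicity exactly one, and then to pin down the remainder of the Euclidean division of $f$ by a monic lift of $g$, tuning that lift modulo $p$ so that the remainder becomes precisely $p$ times a nonzero polynomial of degree $<\deg(g)$. This is a standard lifting/key-polynomial type argument.

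First I would write $\ol{f(x)}=g(x)\,\ol{h}(x)$ in $\F_p[x]$; separability forces $g\nmid\ol{h}$ (otherwise $g^2\mid\ol{f}$), so $\ol{h}$ is invertible modulo $g$, since $\F_p[x]/(g)$ is a field. Choosing any monic lift $\ph_0\in\Z[x]$ of $g$, the division $f=\ph_0Q_0+R_0$ with $\deg(R_0)<\deg(g)$ is legitimate over $\Z$ because $\ph_0$ is monic. Reducing modulo $p$ and using $g\mid\ol{f}$ gives $g\mid\ol{R_0}$, hence $\ol{R_0}=\ol{0}$ for degree reasons; thus $R_0=pT_0$ with $\deg(T_0)<\deg(g)$, while $\ol{Q_0}=\ol{h}$, so $g\nmid\ol{Q_0}$. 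If $\ol{T_0}\neq\ol{0}$ we are already done, with $\ph=\ph_0$, $U=Q_0$, $T=T_0$.

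In general I would adjust the lift: for $S\in\Z[x]$ with $\deg(S)<\deg(g)$ put $\ph=\ph_0+pS$, again a monic lift of $g$. From $f=\ph_0Q_0+pT_0$ and $\ph_0=\ph-pS$ one gets $f=\ph Q_0+p(T_0-SQ_0)$, and one further Euclidean division $T_0-SQ_0=\ph C+D$ with $\deg(D)<\deg(g)$ yields $f=\ph(Q_0+pC)+pD$. Setting $U=Q_0+pC$ one has $\ol{U}=\ol{h}$, so $g\nmid\ol{U}$; setting $T=D$ one has $\deg(T)<\deg(g)$ automatically; and $\ol{D}$ is the residue modulo $g$ of $\ol{T_0}-\ol{S}\,\ol{h}$. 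It remains to pick $\ol{S}$ with $\ol{S}\,\ol{h}\not\equiv\ol{T_0}\pmod{g}$, which is possible because multiplication by the unit $\ol{h}$ permutes $\F_p[x]/(g)$, a field with more than one element (concretely, $\ol{S}=\ol{1}$ works whenever $\ol{T_0}=\ol{0}$). For such $S$ one gets $\ol{T}=\ol{D}\neq\ol{0}$, as required.

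The computations are routine; the one genuinely delicate point is forcing $\ol{T}\neq\ol{0}$, which is exactly why an arbitrary lift will not do and one must be allowed to perturb $\ph$ modulo $p$. Separability is invoked twice: once so that $R_0\equiv0\pmod p$, and once, through $g\nmid\ol{h}$, so that $\ol{h}$ is a unit modulo $g$ and the map $\ol{S}\mapsto\ol{S}\,\ol{h}\bmod g$ is surjective.
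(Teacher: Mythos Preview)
Your proof is correct and follows essentially the same approach as the paper: start with an arbitrary monic lift, perform Euclidean division, and if the remainder is divisible by $p^2$ (i.e., $\overline{T_0}=\overline{0}$), perturb the lift by a multiple of $p$ and redivide. The paper simply fixes the perturbation to $\phi^\star=\phi-p$ (your $S=-1$) and divides $U_1$ by $\phi^\star$ directly, obtaining $T=R_2+p^{l-1}R_1$ with $\overline{T}=\overline{R_2}\neq\overline{0}$ because $g\nmid\overline{U_1}$; your version parametrizes by a general $S$ and then observes any $S$ with $\overline{S}\neq\overline{0}$ works, which amounts to the same computation. One small inaccuracy in your closing remark: separability is not what forces $R_0\equiv 0\pmod p$ (that follows just from $g\mid\overline{f}$ and $\deg R_0<\deg g$); separability is used only once, to get $g\nmid\overline{h}$.
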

\begin{proof}\
	\\As $f(x)$ is separable modulo $p$,  by the Euclidean division algorithm, we set $f(x) = \ph(x) U_1(x)+p^l R_1(x)$, where $l$ is a positive rational  integer, $U_1(x), R_1(x) \in \Z[x]$ such that   $ \deg(R_1(x)) < \deg(\ph(x))$ and $\overline{\ph(x)} \nmid \overline{U_1(x)}$. If $l\ge 2$, then write $f(x)= (\ph(x) -p+p)U_1(x) + p^l R_1(x) $. Set $\ph^{\star}(x)= \ph(x)-p$. Using the Euclidean  division algorithm again, we see that $U_1(x) = \phi^{\star}(x) U_2(x) + R_2(x)$ for some two polynomials $ U_2(x)$ and $ R_2(x)$ in $ \Z[x]$ such that $\deg(R_2(x)) < \deg(\ph^{\star}(x))= \deg(\ph(x))$ and $\overline{R_2(x)} \neq \bar{0}$ (because $\overline{\ph(x)} \nmid \overline{U_1(x)}$). Then we have \begin{eqnarray}
	f(x)&=& \ph^{\star}(x) (U_1(x)+ p U_2(x))+ p(R_2(x) + p^{l-1}R_1(x)) \nonumber \\
	&=& \ph^{\star}(x) U(x)+ p T(x), \nonumber
	\end{eqnarray}where $U(x)= U_1(x)+ p U_2(x)$ and $T(x)=R_2(x) + p^{l-1}R_1(x) $. So, up to replace the lifting  $\ph(x)$ by $\phi^{\star}(x)$,  our claim holds.

\end{proof}

\begin{proof}[Proof of Theorem \ref{dn1}]\
\\\begin{enumerate}
\item 
Under the assumptions of  Theorem \ref{dn1}, we have $F(x) \equiv (x^s+b)^{p^r} \md p$. Since $p\nmid sb$, we have the polynomial $x^s+b$ is square free modulo $p$. Let $g(x)$ be a monic irreducible factor of  $x^s+\overline{b}$ in $\F_p[x]$ of degree $m > 1$. By Lemma \ref{lemtech},  for a suitable lifting $\ph(x)$ of $g(x)$, there exist two polynomials $U(x)$ and $T(x)\in \Z[x]$ such that  $x^s+b=\ph(x) U(x)+pT(x)$, where $\overline{\ph(x)}$ does not divide $\overline{U(x)}\overline{T(x)}$. Set $M(x)=pT(x)-b$ and write 
\begin{eqnarray*}
F(x)&=& x^n+ax+b=(x^s)^{p^r}+ax+b=(\ph(x) U(x)+M(x))^{p^r}+ax+b, \nonumber \\
&=& (\ph(x) U(x))^{p^r}+ \sum_{j=1}^{p^r-1} \binom{p^r}{j} M(x)^{p^r-j}U(x)^j \ph(x)^j + M(x)^{p^r} +ax+b.\nonumber
\end{eqnarray*}
By Binomial expansion and Lemma \ref*{binomial}, we see that  $$M(x)^{p^r}= p^{r+1}H(x)+(-b)^{p^r}, $$where $$H(x)= b^{p^r-1}T(x)+\frac{1}{p^{r+1}}\sum_{j=0}^{p^r-2}(-1)^j\binom{p^r}{j}b^j (pT(x))^{p^r-j}. $$It follows that
  \hspace{10cm} \begin{eqnarray}\label{devn1}
F(x)= (\ph(x) U(x))^{p^r}+ \sum_{j=1}^{p^r-1} \binom{p^r}{j} M(x)^{p^r-j}U(x)^j \ph(x)^j +p^{r+1} H(x)+ax+(-b)^{p^r}+b.
\end{eqnarray}
Thus $F(x) =\sum_{j=0}^{p^r} A_j(x)\ph(x)^j$, where 
\[  \begin{cases}
A_0(x) = p^{r+1} H(x)+ax+(-b)^{p^r}+b,\,\\
A_j(x) =\displaystyle \binom{p^r}{j} M(x)^{p^r-j}U(x)^j \mbox{for every } 1 \leq j \leq p^r.
\end{cases} \]
Using Lemma \ref{binomial} and  reducing modulo $p$, we get $\omega_j = \n_p(A_j(x)) = r-\n_p(j)$ and  $\overline{\left(\dfrac{A_j(x)}{p^{\omega_j}}\right)}=   \dbinom{p^r}{j}_p  \overline{M(x)}^{p^r-j}  \overline{U(x)}^j$   for every $1 \leq j \leq p^r$. Since  $\overline{M(x) }= \bar{b}\neq \bar{0} $ and  $\overline{\ph(x)} \nmid \overline{U(x)}$, then  $\overline{\ph(x)} \nmid \overline{\left(\dfrac{A_j(x)}{p^{\omega_j}}\right)}$ for every $1 \leq j \leq p^r$.   Moreover, If $\delta > r+1$, then $\omega_0 = \n_p(A_0(x))=r+1 = \omega $. On the other hand, by Lemma \ref{binomial}, \begin{eqnarray*}
\n_p(\binom{p^r}{j} \cdot b^j \cdot (pT(x))^{p^r-j}) = r-\n_p(j)+p^r-j > r+1
\end{eqnarray*}for every $0 \le j \le p^r-2$. It follows that $\overline{\left(\dfrac{A_0(x)}{p^{\omega_0}}\right)}= \overline{\left(\dfrac{A_0(x)}{p^{r+1}}\right)} = \overline{H(x)} = \overline{b^{p^r-1} T(x)}$. Then $\overline{\ph(x)} \nmid \overline{\left(\dfrac{A_0(x)}{p^{\omega_0}}\right)}$ (because $\overline{\ph(x)} \nmid \overline{T(x)}$). Consequently, the $\ph$-development  (\ref*{devn1}) of $F(x)$ is admissible. By Lemma \ref{admiss},  $\npp{F}=S_0+S_1+\cdots+S_r$ has $r+1$ sides of degree $1$ each joining the points   $ \{(0, r+1) \} \cup \{(p^j, r-j) \, | \, 0\leq j \leq r \}$ in the Euclidean plane with respective slopes $\l_1 = -1$ and $\l_k = \dfrac{-1}{e_k}$ with $e_k=(p-1)p^{k-1}$ for every $1 \le k \le r$ (see FIGURE 3  for example when $p=3$, $\delta \ge 5$ and $r=4$). Thus $R_{\l_k}(F)(y)$ is irreducible over $\F_{ \ph}$ as it is of degree $1$ for every $0 \le k \le r$. By Theorem \ref{ore}, the irreducible factor  $\overline{\ph(x)}$ of $\overline{F(x)}$ provides $r+1 $ prime  ideals above the rational prime $p$ with residue degree $ deg(\ph(x)) \times \deg(R_{\l_k}(F)(y))= m \times 1 = m$ each. Therefore, the $N_p(m,s,b)$ monic irreducible factors of $F(x)$ modulo $p$ provides $\omega \cdot N_p(m,s,b) $ prime ideals of $\Z_K$ above  $p$. By Lemma \ref{comindex}, if $\omega \cdot N_p(m,s,b) > N_p(m)$, then $p$ is a prime common index divisor of $K$. Hence $K$ is not monogenic. \\
Similarly, if $r+1 > \delta$, then $\omega_0=\delta = \omega $ and $\overline{\left(\dfrac{A_0(x)}{p^{\omega_0}}\right)}=\overline{\left(\dfrac{A_0(x)}{p^{\delta}}\right)} = a_p \bar{x}+ (b+(-b)^{p^r})_p$. So $\overline{\ph(x)} \nmid  \overline{\left(\dfrac{A_0(x)}{p^{\omega_0}}\right)}$ (because $m > 1$). It follows that  the $\ph$-development (\ref{devn1}) of $F(x)$ is admissible. and by Lemma \ref{admiss}, $\npp{F} = S_1+ S_2+\cdots+S_{\delta}$ has $\delta$ sides of degree $1$ each  joining the points $\{(0,\delta),(p^{r-\delta+1}, \delta-1),(p^{r-\delta+2}, \delta-2), \ldots,(p^r,0)\}$, with respective ramification indices $e_1=p^{r-\delta+1}$ and $e_k= (p-1)p^{r-\delta+k-1}$, with respective slopes $\l_1 = \dfrac{-1}{p^{r-\delta +1}}$ and $\l_k = \dfrac{-1}{e_k} $ for every $ 2 \le k \le \delta$.  Thus $R_{\l_k}(F)(y)$ is irreducible over $\F_{ \ph}$ as it is of degree $1$ for every $1 \le k \le \delta$. By Theorem \ref{ore}, $p\Z_K = \prod_{i = 1}^{N_p(m,s,b)} \aF_i \cdot  \aF$, where  $\aF$ is a non-zero ideal of $\Z_K$,  $\aF_i = \prod_{k=1}^{\delta } \pF_{ik}^{e_k} $ such that  for every $1\le k \le \delta$, $1 \le i \le N_p(m,s,b)$,  $\pF_{ik}$  is a  prime ideal of $\Z_K$ with  residue degree  $ f(\pF_{ik}/p) =  \deg(R_{\l_k}(F)(y)) \times m = m $. Therefore, the $N_p(m,s,b)$ monic  irreducible factors of $F(x)$ modulo $p$ provide $\omega \cdot N_p(m,s,b) $ prime ideals of $\Z_K$ above  $p$. By applying Lemma \ref{comindex}, if $\omega \cdot N_p(m,s,b)> N_p(m) $, then $p$ is a prime common index divisor of $K$. So, $K$ is not monogenic.\\
 Now, let $\ph(x)$ be a monic linear monic factors of $F(x)$ modulo a prime $p$; $\overline{\ph(x)}$ is a monic irreducible   factor of $x^s+\overline{b}$ in $\F_p[x]$. 
	\item  Since $\mu< \min(\nu,r+1) $, then  $\omega_0 = \mu$ and $\overline{\left(\dfrac{A_0(x)}{p^{\omega_0}}\right)}=\overline{\left(\dfrac{A_0(x)}{p^{\mu}}\right)} = a_p \bar{x}$. So $\overline{\ph(x)} \nmid \overline{\left(\dfrac{A_0(x)}{p^{\omega_0}}\right)}$ (because $0$ is not a root of $F(x)$ modulo $p$). It follows that the $\ph(x)$-development (\ref{devn1}) of  $F(x)$ is admissible. By Lemma \ref{admiss}, $\npp{F} = S_1+S_2+\cdots+S_{\mu}$ has $\mu$ sides of degree $1$ each joining the   points $\{(0,\mu)\} \cup \{(p^{r-k},k),\, k=0, \ldots, \mu-1 \}$.  Thus $R_{\l_k}(F)(y)$ is irreducible over $\F_{\phi}$ for every $ 1 \le k \le \mu$. By Theorem \ref{ore}, any linear monic irreducible factor of the polynomial $F(x)$ modulo $p$ provides $\mu$ prime ideals of $\Z_K$ lying  over $p$ of residue degree $1$ each. According to  Lemma \ref{comindex}, if  $ N_p(1) = p < \mu \cdot N_p(1,s,b)$, then $p \mid i(K)$. Hence $K$ is not monogenic. 
\item Since $\nu <\min(\mu,r+1)$, then $\omega_0 = \n$ and
$\overline{\left(\dfrac{A_0(x)}{p^{\omega_0}}\right)}=\overline{\left(\dfrac{A_0(x)}{p^{\n}}\right)} =((-b)^{p^r}+b)_p$. So $\overline{\ph(x)} \nmid  \overline{\left(\dfrac{A_0(x)}{p^{\omega_0}}\right)}$, then the $\ph(x)$-development (\ref{devn1}) of  $F(x)$ is admissible. By Lemma \ref{admiss}, $\npp{F} = S_1+S_2+\cdots+S_{\n}$ has $\n$ sides of degree $1$ each joining the points $(0, \n), (p^{r-\n+1}, \n -1), (p^{r-\n+2}, \n -2), \ldots, (p^r,0)$ in the Euclidean plane. Thus  $R_{\l_k}(F)(y)$ is irreducible over $\F_{\phi}$ for every $ 1 \le k \le \nu$. By Theorem \ref{ore}, any linear  monic irreducible factor of the polynomial $F(x)$ modulo $p$ provides $\n$ prime ideals of $\Z_K$ over $p$ of residue degree $1$ each.  It follows by Lemma \ref{comindex} that, if  $ N_p(1) = p < \n \cdot N_p(1,s,b)$, then $p $ is a prime common index divisor of $K$. So, $K$ is not monogenic.
\item Since $r+1 < \min (\mu,\nu)$, then
$\overline{\left(\dfrac{A_0(x)}{p^{\omega_0}}\right)}=\overline{\left(\dfrac{A_0(x)}{p^{r+1}}\right)} = \overline{H(x)}$. So $\overline{\ph(x)} \nmid  \overline{\left(\dfrac{A_0(x)}{p^{\omega_0}}\right)}$. Then the $\ph(x)$-development (\ref{devn1}) of  $F(x)$ is admissible. Thus $\npp{F} = S_0+S_2+\cdots+S_{r}$ has $r+1$ sides of degree $1$ each. By Theorem \ref{ore}, any linear monic irreducible factor of the polynomial $F(x)$ modulo $p$ provides $r+1$ prime ideals of $\Z_K$ over $p$ of residue degree $1$ each. Using Lemma \ref{comindex}, if  $N_p(1) = p < (r+1) \cdot N_p(1,s,b)$, then $p \mid i(K)$. Consequently, the field  $K$ is not monogenic.
\end{enumerate}	

\begin{figure}[htbp] 
	\centering
	\begin{tikzpicture}[x=0.16cm,y=0.45cm]
	\draw[latex-latex] (0,6.8) -- (0,0) -- (83,0) ;
	\draw[thick] (0,0) -- (-0.5,0);
	\draw[thick] (0,0) -- (0,-0.5); 
	\draw[thick,red] (1,-2pt) -- (1,2pt);
	\draw[thick,red] (3,-2pt) -- (3,2pt);
	\draw[thick,red] (9,-2pt) -- (9,2pt);
	\draw[thick,red] (27,-2pt) -- (27,2pt);
	\draw[thick,red] (81,-2pt) -- (81,2pt);
	\draw[thick,red] (-2pt,1) -- (2pt,1);
	\draw[thick,red] (-2pt,2) -- (2pt,2);
	\draw[thick,red] (-2pt,3) -- (2pt,3);
	\draw[thick,red] (-2pt,4) -- (2pt,4);
	\node at (0,0) [below left,blue]{\footnotesize  $0$};
	\node at (1,0) [below ,blue]{\footnotesize  $1$};
	\node at (3,0) [below ,blue]{\footnotesize $3$};
	\node at (9,0) [below ,blue]{\footnotesize  $3^{2}$};
	\node at (27,0) [below ,blue]{\footnotesize  $3^{3}$};
	\node at (81,0) [below ,blue]{\footnotesize  $3^{4}$};
	\node at (0,1) [left ,blue]{\footnotesize  $1$};
	\node at (0,2) [left ,blue]{\footnotesize  $2$};
	\node at (0,3) [left ,blue]{\footnotesize  $3$};
	\node at (0,4) [left ,blue]{\footnotesize  $4$};
	\draw[thick,mark=*] plot coordinates{(0,6) (1,4)(3,3) (9,2) (27,1) (81,0)};
	\node at (0.3,4.2) [above right  ,blue]{\footnotesize  $S_{1}$};
	\node at (1.8,3.1) [above right  ,blue]{\footnotesize  $S_{2}$};
	\node at (6,2.1) [above right  ,blue]{\footnotesize  $S_{3}$};
	\node at (16,1.25) [above right  ,blue]{\footnotesize  $S_{4}$};
	\node at (50,0.3) [above right  ,blue]{\footnotesize  $S_{5}$};
	\end{tikzpicture}
	\caption{    \large  $\npp{F}$ at $p=3$, where $r=4$ and $\delta > 5$.\hspace{5cm}}
\end{figure}
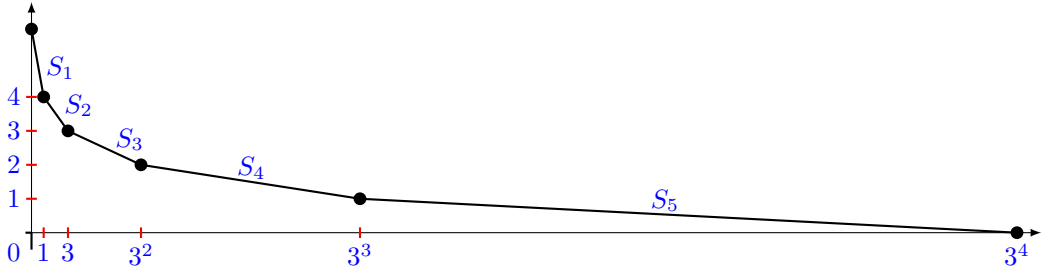
\end{proof}

\begin{proof}[Proof of Theorem \ref{dn2}]\	
\\\begin{enumerate}
	\item 	By hypothesis  $p\mid b$, $p\mid n-1$, and $p\nmid a$, $F(x) \equiv x(x^u+a)^{p^k} \md p$. Let $\ph(x) \in \Z[x]$ be a monic polynomial  of degree $m>1$  such that  $\overline{\ph(x)}$ is an  irreducible factor of the polynomial $F(x)$ modulo $p$:  $\overline{\ph(x)}$  is a monic  irreducible factor of  $x^u+\overline{a }$ in $\F_p[x]$. By using  Lemma \ref{lemtech}, we set  $x^u +a = \ph(x) U(x) + pT(x)$, where $U(x), T(x) \in \Z[x]$ such that $\overline{\ph(x)} \nmid \overline{U(x) T(x)}$. Write 
	\begin{eqnarray*}
	F(x)&=& x(x^u)^{p^k} + ax +b = x(x^u +a -a)^{p^k}+ax+b \nonumber\\
	&=&x(\ph(x) U(x) + p T(x)-a)^{p^k}+ax+b\nonumber \\
	\end{eqnarray*}
Applying Binomial theorem, we see that 
\begin{eqnarray*}
F(x)= x(\ph(x) U(x))^{p^k} +  \sum_{j=1}^{p^k-1} \binom{p^k}{j}x N(x)^{p^k-j} U(x)^j \ph(x)^j+ xN(x)^{p^k}+ax+b
\end{eqnarray*}

where $N(x) = p T(x)-a$. By Lemma \ref{binomial},  $N(x)^{p^k} = p^{k+1}H(x)+(-a)^{p^k}$, where $$H(x)= a^{p^k-1}T(x)+\frac{1}{p^{k+1}} \sum_{j=0}^{p^k-2} (-1)^j \binom{p^k}{j}a^j(pT(x))^{p^k-j}.$$It follows that 
\begin{eqnarray}\label{dn12}
F(x)=x (\ph(x) U(x))^{p^k} +  \sum_{j=1}^{p^k-1} \binom{p^k}{j} xN(x)^{p^k-j} U(x)^j \ph(x)^j+ p^{k+1}xH(x)+((-a)^{p^k}+a)x+b.
\end{eqnarray}	
Thus $F(x)= \sum_{j=0}^{p^k} A_j(x)\ph(x)^j$, where 	
	
	\[  \begin{cases}
	A_0(x) = p^{k+1} x H(x)+((-a)^{p^k}+a)x+b,\,\\
	A_j(x) =\displaystyle\binom{p^k}{j} x N(x)^{p^k-j}U(x)^j\,\, \mbox{for\, every } 1 \leq j \leq p^k.
	\end{cases} \]Note that $\overline{N(x)} = \bar{a}$. By using Lemma \ref{binomial}, we see that  $\omega_j = \nu_p(A_j(x))= \n_p(\dbinom{p^k}{j}) =k-\n_p(j)$, then $\overline{\left(\dfrac{A_j(x)}{p^{\omega_j}}\right)}= \dbinom{p^k}{j}_p  \overline{ x N(x)^{p^k-j}U(x)^j}$. Thus $\overline{\ph(x)} \nmid \overline{\left(\dfrac{A_j(x)}{p^{\omega_j}}\right)}$ for every $1\le j \le p^k$. Moreover, If $\t > k+1$, then $\omega_0=\n_p(A_0(x))  =k+1 = \k$ and  $\overline{\left(\dfrac{A_0(x)}{p^{\omega_0}}\right)}= \overline{xH(x)} = \overline{a^{p^k-1}  x T(x)}$. So $\overline{\ph(x)} \nmid \overline{\left(\dfrac{A_0(x)}{p^{\omega_0}}\right)}$. It follows that the $\ph$-development (\ref{dn12}) of $F(x)$ is admissible. By Lemma \ref{admiss}, $\npp{F}= S_0+S_1+\cdots +S_{k}$ has $ k+1$ sides of degree $1$ each with respective ramification indices $e_0=1$, $e_i = (p-1)p^{i-1}$, with respective slopes $\l_1 = -1$, $\l_i = \frac{-1}{e_i}$ for every $1 \le i \le k$. Thus  $R_{\l_i}(F)(y)$ is irreducible over $\F_{ \ph}$  as it is of degree $1$, $0 \le i \le k$. It follows that $F(x)$ is $\ph(x)$-regular with respect to $\n_p$. By Theorem \ref{ore},  $p\Z_K = \prod_{t = 1}^{N_p(m,u,a)} \aF_t \cdot  \aF$, where  $\aF$ is a non-zero ideal of $\Z_K$,  $\aF_t = \prod_{i=0}^{k } \pF_{ti}^{e_i} $ such that  for every $1 \le t \le N_p(m,u,a)$,  $0\le i \le k$,  $\pF_{ti}$  is a  prime ideal of $\Z_K$ with  residue degree  $ f(\pF_{ti}/p) = m \times \deg(R_{\l_i}(F)(y)) = m $. Thus the $N_p(m,u,a)$ monic  irreducible factors of $F(x)$ modulo $p$ provide $\k \cdot N_p(m,u,a) $ prime ideals of $\Z_K$ over $p$ of residue degree $m$ each. By Lemma \ref{comindex}, if  $\k \cdot N_p(m,u,a)> N_p(m) $, then $p\mid i(K)$. Consequently, $K$ is not monogenic.
Similarly, if $k+1 > \t$, then $\omega_0 = \t= \k$ and $\overline{\left(\dfrac{A_0(x)}{p^{\omega_0}}\right)} = \overline{\left(\dfrac{A_0(x)}{p^{\t}}\right)}= ((-a)^{p^k}+a)_p \bar{x}+b_p \neq \bar{0}$. Since $\deg(\ph(x)) = m >1$, then $\overline{\ph(x)}$ does not divide $\overline{\left(\dfrac{A_0(x)}{p^{\omega_0}}\right)}$. It follows that the $\ph$-development (\ref{dn12}) of $F(x)$ is admissible. By Lemma \ref{admiss}, $\npp{F}= S_1+S_2+\cdots +S_{\t}$ has $\t$ sides of degree $1$ each with respective ramification indices $e_1= p^{k+1-\t}$, $e_i = (p-1)\cdot  p^{k - \t + i - 1 }$ for every $2 \le i \le \t$ and with respective slopes $\l_i = \frac{-1}{e_i}$ for every $1 \le i \le \t$.  Thus $R_{\l_i}(F)(y)$ is irreducible over $\F_{ \ph}$ for every $1 \le i \le \t$. By Theorem \ref{ore}, the monic irreducible factor $\ph(x)$ of $F(x)$ modulo $p$ provides $ \t$ prime ideals above $p$ with the same residue degree $f = \deg(\ph) \times \deg(R_{\l_i}(F)(y)) = m \times 1 = m $, $1 \le i \le \k$. It follows that the $N_p(m,u,a)$ monic  irreducible factors  of the polynomial $F(x)$ provide $\k \cdot N_p(m,u,a)$, prime ideal of $\Z_K$ lying  above $p$ of residue degree $m$ each. By Lemma \ref{comindex}, if $\k \cdot N_p(m,u,a) > N_p(m)$, then $p \mid i(K)$. So, $K$ is not monogenic.	\\
\item For the proof of Theorem \ref{dn2}(2), (3) and (4), we will use the linear monic irreducible factors of $\overline{F(x)}$ in  $\F_p[x]$ to prove that $i(K)>1$, namely   $x$ and the linear  monic factors of $x^u +\overline{a}$.  Since $\n_{x}(\overline{F(x)})=1$, then the factor   $x$ provide a unique prime ideal over $p$ of residue degree $1$. For the linear monic factors  of $x^u + \overline{a}$, we proceed by analogous to the proof of Theorem \ref{dn12} (1), (2) and (3); by using   Lemma \ref{binomial}, we get the exact  value of $\omega_0 = \n_p(A_0(x))$, we verify that  $\overline{\ph(x)}$ does not divide $\overline{\left(\dfrac{A_0(x)}{p^{\omega_0}}\right)}$ which ensures that the $\ph$-development (\ref{dn12}) of $F(x)$ is  admissible. After determining Newton polygons, we prove that  $F(x)$ is $\ph$-regular. Finally, we apply Theorem \ref{ore} and Lemma \ref{comindex} to show that $p\mid i(K)$.
\end{enumerate}
\end{proof}
\begin{proof}[Proof of Corollaries \ref{corn11} and \ref{corn12} ]\
	\\According to the Explicit factorization  of the polynomial  $x^{2^k}+1$ into product of monic irreducible polynomials  over $\F_p$ with prime $p \equiv 3 \md 4$ given in  \cite[Theorem 1 and Corollary 3]{Blake}, and after calculations we have the following complete factorization of the polynomials  $x^{2^k} - 1$ into product of irreducible polynomials in  $ \F_p[x]$: for every rational positive integer   $k\ge 3$, we have $$ x^{2^k} - 1 = (x-1)(x-2)(x^2-2)(x^2-x-1)(x^2-2x-1) \prod_{ \underset{1 \le t \le k-3\,\,/ k \,\,\ge 4}{u\in \{\frac{-1}{2}, \frac{1}{2}\}} }(x^{2^{t+1}}-2ux^{2^t}-1)\md 3. $$It follows that $N_3(1, 2^k,2)=2$ for every $k \ge 1$,   $N_3(2, 2^2,2)=1$  and $N_3(2, 2^k, 2) = 3$ for every $k \ge 3$. On the other hand $N_3(2, 2^1, 1)=1$ and   $N_3(2, 2^2, 1)=2$.  Then by a direct  applications of   Theorem \ref{dn1} and \ref{dn2}, we conclude  the two corollaries.
	
\end{proof}
\begin{proof}[Proof of Theorem \ref{d51}]\
	\\ First, we note that in Theorem \ref{d51}(1),\dots, (6), we have
 $2 \mid b$ and   $2\nmid a$. Then $F(x)\equiv x(x-1)^4 \md 2$. Set $\ph = x-1$. The $\ph$-adic development of $F(x)$ is   
 \begin{eqnarray}\label{devd51}
F(x) = \ph^5 + 5 \ph^4+10 \ph^3+10 \ph^2+(5+a)\ph+(1+a+b).
 \end{eqnarray}

\begin{enumerate}
 	\item	Since $a\equiv 1 \md 4$ and $b\equiv 2  \md 4$, then $\n_2(a+5)=1$ and $\n_2(1+a+b) \ge 2$. By the $\ph$-adic development (\ref{devd51}) of $F(x)$,
	 $\npp{F} = S_1+S_2$ has two sides of degree $1$ each, with respective ramification indices $e_1=1$ and $e_2=3$  (see FIGURE $4$). Thus  $R_{\l_k}(F)(y)$ is irreducible over $\F_{ \ph}$, $k = 1, 2$. By Theorem \ref{ore}, $2\Z_K= \pF_0 \pF_1\pF_2^3$, with residue degrees $f(\pF_k/2)=1$ for  $k=0,1,2$. It follows by Lemma \ref{comindex} that $2\mid i(K)$, and so $K$ is not monogenic.

	\begin{figure}[htbp] 
		\begin{tikzpicture}[x=1.5cm,y=0.5cm]
		\draw[latex-latex] (0,4) -- (0,0) -- (4.5,0) ;
		\draw[thick] (0,0) -- (-0.5,0);
		\draw[thick] (0,0) -- (0,-0.5); 
		\draw[thick,red] (1,-2pt) -- (1,2pt);
		\draw[thick,red] (3,-2pt) -- (3,2pt);
		\draw[thick,red] (2,-2pt) -- (2,2pt);
		\draw[thick,red] (4,-2pt) -- (4,2pt);
		\draw[thick,red] (-2pt,1) -- (2pt,1);
		\draw[thick,red] (-2pt,2) -- (2pt,2);
		\draw[thick,red] (-2pt,3) -- (2pt,3);
		
		\node at (0,0) [below left,blue]{\footnotesize  $0$};
		\node at (1,0) [below ,blue]{\footnotesize  $1$};
		\node at (2,0) [below ,blue]{\footnotesize $2$};
		\node at (3,0) [below ,blue]{\footnotesize  $3$};
		\node at (4,0) [below ,blue]{\footnotesize  $4$};
		\node at (0,1) [left ,blue]{\footnotesize  $1$};
		\node at (0,2) [left ,blue]{\footnotesize  $2$};
			\draw[thick, only marks, mark=x] plot coordinates{ (2,1)  (3,1) };
		\draw[thick,mark=*] plot coordinates{(0,3) (1,1) (4,0)};
		\node at (0.5,1.5) [above right  ,blue]{\footnotesize  $S_{1}$};
		\node at (2.5,0.2) [above right  ,blue]{\footnotesize  $S_{2}$};

		\end{tikzpicture}
		\caption{   \hspace{5cm}}
	\end{figure}

	\item Since  $(\bar{a},\bar{b})= ( \bar{7}, \bar{8}) \,\,\mbox{or}\,\, (\bar{15}, \bar{0})$ in $(\Z/{16\Z})^2$, $\n_2(a+5)=2$ and $\n_2(1+a+b) \ge 4$. By the $\ph$-adic development (\ref{devd51}) of $F(x)$, $\npp{F}=S_1+S_2+S_3$ has three sides with the same degree $1$, with respective ramification indices $e_1 = e_2 =1 $ and $e_3=2$ (see FIGURE $5$). Thus  $R_{\l_k}(F)(y)$ is irreducible over $\F_{ \ph}$, $k = 1, 2, 3$. By Theorem \ref{ore}, $2\Z_K= \pF_0\pF_1\pF_2\pF_3^2 $, where $\pF_k$ is a prime ideal of $\Z_K$ with residue degree $f(\pF_k/2) =1$, $k=0,1,2,3$. By Lemma \ref{comindex}, $2\mid i(K)$. Hence $K$ is not monogenic. 
	
	\begin{figure}[htbp] 
		\begin{tikzpicture}[x=1.5cm,y=0.3cm]
		\draw[latex-latex] (0,6) -- (0,0) -- (4.5,0) ;
		\draw[thick] (0,0) -- (-0.5,0);
		\draw[thick] (0,0) -- (0,-0.5); 
		\draw[thick,red] (1,-2pt) -- (1,2pt);
		\draw[thick,red] (3,-2pt) -- (3,2pt);
		\draw[thick,red] (2,-2pt) -- (2,2pt);
		\draw[thick,red] (4,-2pt) -- (4,2pt);
		\draw[thick,red] (-2pt,1) -- (2pt,1);
		\draw[thick,red] (-2pt,2) -- (2pt,2);
		\draw[thick,red] (-2pt,3) -- (2pt,3);
		\draw[thick,red] (-2pt,4) -- (2pt,4);
		
		\node at (0,0) [below left,blue]{\footnotesize  $0$};
		\node at (1,0) [below ,blue]{\footnotesize  $1$};
		\node at (2,0) [below ,blue]{\footnotesize $2$};
		\node at (3,0) [below ,blue]{\footnotesize  $3$};
		\node at (4,0) [below ,blue]{\footnotesize  $4$};
		\node at (0,1) [left ,blue]{\footnotesize  $1$};
		\node at (0,2) [left ,blue]{\footnotesize  $2$};
		\node at (0,3) [left ,blue]{\footnotesize  $3$};
		\node at (0,4) [left ,blue]{\footnotesize  $4$};
			\draw[thick, only marks, mark=x] plot coordinates{  (3,1) };
		\draw[thick,mark=*] plot coordinates{(0,5) (1,2)(2,1) (4,0)};
		\node at (0.5,3) [above right  ,blue]{\footnotesize  $S_{1}$};
		\node at (1.5,1.1) [above right  ,blue]{\footnotesize  $S_{2}$};
		\node at (3.2,0.1) [above right  ,blue]{\footnotesize  $S_{3}$};
		
		\end{tikzpicture}
		\caption{   \hspace{5cm}}
	\end{figure}

\item Since  $(\bar{a},\bar{b}) = (\overline{19},\overline{4}) \, \mbox{or}\,\,(\overline{3},\overline{20})$ in  $(\Z/{32\Z})^2$,  $\n_2(a+5)=3$ and $\n_2(1+a+b) =3 $. Thus   $\npp{F} = S_1+S_2$ has two sides with respective  degrees $2$ and $1$. But, the residual polynomial  attached te the segment $S_1$: $R_{\l_1}(F)(y) = 1+y^2 =(1+y)^2 $ is not separable over $\F_{{\ph}} \simeq \F_2$. Thus Theorem \ref{ore} is not applicable. Replace $\ph(x)$  by $\psi(x) := x-3$. The $\psi$-adic developement of $F(x)$ is

\begin{eqnarray}\label{devpsi51}
F(x) = \psi^5+ 15\psi^4+90 \psi^3+ 270 \psi^2+(a+5+400)\psi+ 1+a+b+2(a+5)+232.
\end{eqnarray}Moreover, we have $$\n_2(a+5+400)=3\,\, \mbox{and}\,\, \n_2(1+a+b+2(a+5)+232)=4.$$ Thus  $N_{\psi}^+(F) = S_1^{'}+S_2^{'}$ has two sides of degree $1$ and ramifications index $2$ each (see FIGURE $6$).  Thus  $R_{\l^{'}_k}(F)(y)$ is irreducible over $\F_{ \psi} \simeq \F_2$, $k = 1, 2$. By Theorem \ref{ore}, $2 \Z_K = \pF_0 \pF_1^2 \pF_2^2 $, with residue degrees $f(\pF_k/2) =1$, $k=0,1,2$. By Lemma \ref{comindex}, $2 \mid i(K)$. Hence $K$ is not monogenic.
\begin{figure}[htbp] 
	\centering
\begin{tikzpicture}[x=1.5cm,y=0.4cm]
\draw[latex-latex] (0,5) -- (0,0) -- (4.5,0) ;
	\draw[thick] (0,0) -- (-0.5,0);
	\draw[thick] (0,0) -- (0,-0.5); 
	\draw[thick,red] (1,-2pt) -- (1,2pt);
	\draw[thick,red] (3,-2pt) -- (3,2pt);
	\draw[thick,red] (2,-2pt) -- (2,2pt);
	\draw[thick,red] (4,-2pt) -- (4,2pt);
	\draw[thick,red] (-2pt,1) -- (2pt,1);
	\draw[thick,red] (-2pt,2) -- (2pt,2);
	\draw[thick,red] (-2pt,3) -- (2pt,3);
	
	\node at (0,0) [below left,blue]{\footnotesize  $0$};
	\node at (1,0) [below ,blue]{\footnotesize  $1$};
	\node at (2,0) [below ,blue]{\footnotesize $2$};
	\node at (3,0) [below ,blue]{\footnotesize  $3$};
	\node at (4,0) [below ,blue]{\footnotesize $4$};
	\node at (0,3) [left ,blue]{\footnotesize  $3$};
	\node at (0,4) [left ,blue]{\footnotesize  $4$};
	\node at (0,1) [left ,blue]{\footnotesize  $1$};
	\node at (0,2) [left ,blue]{\footnotesize  $2$};
		\draw[thick, only marks, mark=x] plot coordinates{ (1,3)  (3,1) };
	\draw[thick,mark=*] plot coordinates{(0,4) (2,1) (4,0)};
	\node at (1,1.8) [above right  ,blue]{\footnotesize  $S_{1}^{'}$};

	\node at (2.5,0.3) [above right  ,blue]{\footnotesize  $S_{2}^{'}$};
	
	\end{tikzpicture}
	\caption{   \hspace{5cm}}
\end{figure}

\item Since $(\overline{a},\overline{b})= ( \overline{3}, \overline{4}),  (\overline{35}, \overline{36}), ( \overline{19}, \overline{20}) \,\, \mbox{or}\,\,  (\overline{51}, \overline{52})$ in $(\Z/{64\Z})^2$, we have$$\n_2(a+5+400)=3\,\, \mbox{and}\,\, \n_2(1+a+b+2(a+5)+232) \ge 6.$$  According to  the $\psi$-adic development (\ref{devpsi51}) of $F(x)$,  we conclude that
 $N_{\psi}^+(F) = S_1^{'}+S_2^{'}+S_3^{'}$ has three sides of degree $1$ each  with respective ramification indices $e_1^{'}=e_2^{'}=1$ and $e_3^{'}=2$ (see FIGURE $7$). Thus by Theorem \ref{ore}, $2\Z_K = \pF_0 \pF_1 \pF_2 \pF_3^2$, with residue degrees $f(\pF_k/2)=1$ for every $0 \le k \le 3$. By Lemma \ref{comindex}, $2\mid i(K)$, and so $K$ is not monogenic.

	\begin{figure}[htbp] 
		\centering
	\begin{tikzpicture}[x=1.3cm,y=0.3cm]
	\draw[latex-latex] (0,7) -- (0,0) -- (4.5,0) ;
	\draw[thick] (0,0) -- (-0.5,0);
	\draw[thick] (0,0) -- (0,-0.5); 
	\draw[thick,red] (1,-2pt) -- (1,2pt);
	\draw[thick,red] (3,-2pt) -- (3,2pt);
	\draw[thick,red] (2,-2pt) -- (2,2pt);
	\draw[thick,red] (4,-2pt) -- (4,2pt);
	\draw[thick,red] (-2pt,1) -- (2pt,1);
	\draw[thick,red] (-2pt,2) -- (2pt,2);
	\draw[thick,red] (-2pt,3) -- (2pt,3);
		\draw[thick,red] (-2pt,4) -- (2pt,4);
	\draw[thick,red] (-2pt,5) -- (2pt,5);
	\node at (0,0) [below left,blue]{\footnotesize  $0$};
	\node at (1,0) [below ,blue]{\footnotesize  $1$};
	\node at (2,0) [below ,blue]{\footnotesize $2$};
	\node at (3,0) [below ,blue]{\footnotesize  $3$};
	\node at (4,0) [below ,blue]{\footnotesize  $4$};
	\node at (0,1) [left ,blue]{\footnotesize  $1$};
	\node at (0,2) [left ,blue]{\footnotesize  $2$};
	\node at (0,3) [left ,blue]{\footnotesize  $3$};
	\node at (0,4) [left ,blue]{\footnotesize  $4$};
		\node at (0,5) [left ,blue]{\footnotesize  $5$};

	\draw[thick, only marks, mark=x] plot coordinates{  (3,1) };
	\draw[thick,mark=*] plot coordinates{(0,6) (1,3) (2,1) (4,0)};
	\node at (0.5,4) [above right  ,blue]{\footnotesize  $S_{1}^{'}$};
	\node at (1.5,1.4) [above right  ,blue]{\footnotesize  $S_{2}^{'}$};
		\node at (2.5,0.3) [above right  ,blue]{\footnotesize  $S_{3}^{'}$};
	
	\end{tikzpicture}
	\caption{   \hspace{5cm}}
\end{figure}

 \item  Since $(\bar{a},\bar{b}) = (\overline{3},\overline{12}) \, \mbox{or}\,\,(\overline{19},\overline{28})$ in  $(\Z/{32\Z})^2$,  $\n_2(a+5)=3$ and $\n_2(1+a+b) =4 $. Thus   $\npp{F} = S_1+S_2$ has two sides of degree $1$ and ramification index $2$ each (see FIGURE $8$). By Theorem \ref{ore}, $2\Z_K = \pF_1 \pF_2^2 \pF_3^2$ such that  $f(\pF_k/2)=2$, $ k = 0, 1, 2$. By Lemma \ref{comindex}, $2 \mid i(K)$ and so $K$ is not monogenic.  
  	\begin{figure}[htbp] 
	\begin{tikzpicture}[x=1.5cm,y=0.5cm]
	\draw[latex-latex] (0,5) -- (0,0) -- (4.5,0) ;
	\draw[thick] (0,0) -- (-0.5,0);
	\draw[thick] (0,0) -- (0,-0.5); 
	\draw[thick,red] (1,-2pt) -- (1,2pt);
	\draw[thick,red] (3,-2pt) -- (3,2pt);
	\draw[thick,red] (2,-2pt) -- (2,2pt);
	\draw[thick,red] (4,-2pt) -- (4,2pt);
	\draw[thick,red] (-2pt,1) -- (2pt,1);
	\draw[thick,red] (-2pt,2) -- (2pt,2);
	\draw[thick,red] (-2pt,3) -- (2pt,3);
	
	\node at (0,0) [below left,blue]{\footnotesize  $0$};
	\node at (1,0) [below ,blue]{\footnotesize  $1$};
	\node at (2,0) [below ,blue]{\footnotesize $2$};
	\node at (3,0) [below ,blue]{\footnotesize  $3$};
	\node at (4,0) [below ,blue]{\footnotesize $4$};
	\node at (0,3) [left ,blue]{\footnotesize  $3$};
	\node at (0,4) [left ,blue]{\footnotesize  $4$};
	\node at (0,1) [left ,blue]{\footnotesize  $1$};
	\node at (0,2) [left ,blue]{\footnotesize  $2$};
	\draw[thick, only marks, mark=x] plot coordinates{ (1,3)  (3,1) };
	\draw[thick,mark=*] plot coordinates{(0,4) (2,1) (4,0)};
	\node at (1,2) [above right  ,blue]{\footnotesize  $S_1$};
	
	\node at (2.5,0.5) [above right  ,blue]{\footnotesize  $S_2$};
	
	\end{tikzpicture}
	\caption{   \hspace{5cm}}
\end{figure}
\item Since  $(\overline{a},\overline{b})= ( \overline{3}, \overline{60}),  (\overline{19}, \overline{44}), ( \overline{35}, \overline{28}) \,\, \mbox{or}\,\,  (\overline{51}, \overline{12})$ in $(\Z/{64\Z})^2$, we have  $\n_2(a+5)=3$ and $\n_2(1+a+b) \ge 6 $. Thus   $\npp{F} = S_1+S_2+S_3$ has three sides with degree $1$ each and ramification indices $e_1 = e_2=1$ and $e_3=2$. By Theorem \ref{ore}, $2\Z_K =  \pF_0 \pF_1 \pF_2 \pF_3^2$, with $f(\pF_k/2) = 1$ for $k=0,1,2,3$. By Lemma \ref{comindex}, $2 \mid i(K)$. Consequently $K$ is not monogenic.

\item Since  $a \equiv 4 \md 8$ and $b \equiv 0 \md{8} $,  then $F(x) \equiv \ph^5 \md 2$, where $\ph = x$. It follows that $\npp{F} = S_1+S_2$ has two sides with respective degrees  $d(S_1)=1$, $d(S_2)=2$ and ramification indices $e_1 = 1$, $e_2 = 2$. Their attached residual polynomials $R_{\l_1}(F)(y)=1+y$ and $R_{\l_2}(F)(y)=(1+y)^2$. Then $2\Z_K =\pF_1 \aF^2$ where $\pF_1$ is a prime ideal with $f(\pF_1/2)=1$ and $\aF$ is proper ideal of $\Z_K$. Let us use second order Newton polygon as introduced in \cite{Nar}, let $\ph_2 = x^2 - 2$ ans $\n_2^{(2)}$ be the valuation of second order induced by $\ph_2$. The $\ph_2$-adic developement of $F(x)$ is given by: 
\begin{eqnarray*}
	F(x)=x\ph_2^2 + 4x\ph_2+(4+a)x+b.
\end{eqnarray*}
 By \cite[Theorem 2.11 and Proposition 2.7]{Nar}, we get \begin{eqnarray*}
\hspace{1.3cm} \n_2^{(2)}(x)=1,\, \n_2^{(2)}(\ph_2)=2,\, \n_2^{(2)}(x\ph_2^2)=5,\, \n_2^{(2)}(4x\ph_2)=7,\, \n_2^{(2)}((4+a)x+b)\ge 10.
\end{eqnarray*}
Hence the $\ph_2-$Newton polygon of second order $N_2(F) = S_1^{(2)}+ S_2^{(2)}$ has two sides of degree one each  (see FIGURE $9$), with respective slopes $\l^2_1  \le -3$ and $\l^2_2 = -2$. Their attached residual polynomials  $R^{(2)}_{\l^2_1} (F)(y)=R^{(2)}_{\l^2_2 } (F)(y)=1+y$ in $\F^2[y]$. Thus by  \cite[Theorem 3.1 and 3.4]{Nar} in second order,   $2 \Z_K = \pF_1 (\pF_2 \pF_3)^2$, where $f(\pF_k/2)= 1$ for every $k = 1,2,3$. It follows that for $p=2$, $3=P_1>N_2(1)=2$. Then, by  Lemme \ref{comindex}, $2 \mid i(K)$. So $K$ is not monogenic.
\begin{figure}[htbp] 
	\begin{tikzpicture}[x=3cm,y=0.3cm]
	\draw[latex-latex] (0,9) -- (0,0) -- (2.5,0) ;
	\draw[thick] (0,0) -- (-0.5,0);
	\draw[thick] (0,0) -- (0,-0.5); 
	\draw[thick,black] (1,-2pt) -- (1,2pt);
	\draw[thick,black] (2,-2pt) -- (2,2pt);
	\draw[thick,black] (-2pt,1) -- (2pt,1);
	\draw[thick,black] (-2pt,6) -- (2pt,6);
	\draw[thick,black] (-2pt,3) -- (2pt,3);
	\node at (0,0) [below left,black]{\footnotesize  $0$};
	\node at (1,0) [below , black]{\footnotesize  $1$};
	\node at (2,0) [below ,black]{\footnotesize $2$};

	\node at (0,1) [left ,black]{\footnotesize  $5$};
	\node at (0,3) [left ,black]{\footnotesize  $7$};
	\node at (0,6) [left ,black]{\footnotesize  $10$};

	\draw[thick,mark=*] plot coordinates{(0,7) (1,3)(2,1) };
	\draw[thick, dashed] plot coordinates{(0,1) (2.4,1) };
		\draw[thick, dashed] plot coordinates{(0,0) (0,1) };
	\node at (0.5,4.4) [above right  ,black]{\footnotesize  $S_1^2$};
	\node at (1.4,1.7) [above right  ,black]{\footnotesize  $S_2^2$};

	\end{tikzpicture}
	\caption{      \hspace{5cm}}
\end{figure}

\end{enumerate}

\end{proof}

\begin{proof}[Proof of Theorem \ref{d61}]\
	\\ First, we note  that in   Theorem \ref{d61}(1),(2) and (3)
we have $2 \mid a$ and $2\nmid b$, then $ F(x) \equiv (\ph \cdot  \psi)^2 \md 2$, where $\ph = x-1$ and $\psi = x^2+x+1$. The $\ph$-adic development of $F(x)$ is 
\begin{eqnarray}\label{dev61phi}
F(x)= \ph^6+6 \ph^5+15 \ph^4+20 \ph^3 + 15 \ph^2+(6+a)\ph+1+a+b,
\end{eqnarray}
and the $\psi$-adic development of $F(x)$ is\begin{eqnarray} \label{dev61psi}
F(x)=\psi^3-3x\psi^2+(2x-2)\psi+ ax+1+b . 
\end{eqnarray}
\begin{enumerate}
	\item Since $a \equiv 0 \md 8$ and $b \equiv 7 \md 8$, then $ \n_2(ax+1+b)= \min(\n_2(a), \n_2(1+b)) \ge 3$. According to the $\psi$-adic development (\ref{dev61psi}) of $F(x)$, then  $\nppsi{F} = S_1+S_2$ has two sides of degree $1$ each. Thus $R_{\l_k}(F)(y)$ is irreducible over $\F_{ \psi}$, $k = 1,2$. By applying Theorem \ref{ore}, the irreducible factor $\overline{\psi}$ of $\overline{F(x)}$ provides two  prime ideals of $\Z_K$  lying  over the rational prime $2$ with the same residue degree $f= deg(\psi)\cdot R_{\l_k}(F)(y)=2 \times 1=2$. Using Lemma \ref{comindex}, we see that $2 \mid i(K)$. Consequently $K$ is not monogenic. 
	\item Since $a=2 \md 4$, $b \equiv 1 \md 4$, then  $\n_2(ax+1+b)=1$. By  the $\psi$-adic development (\ref{dev61psi}) of $F(x)$, $\nppsi{F}=T$ is only one side of  degree $1$ joining the points $(0,1)$ and $(2,0)$, with ramification index $2$ and with slope $\l = \frac{-1}{2}$, thus $R_{\l}(F)(y)$ is irreducible over $\F_{ \psi}$. On the other hand, as $\n_2(1+a+b) = 2 \n_2(a+6)$. Then,  according to (\ref{dev61phi}), $\npp{F}=S$ is only one side of degree $2$, with ramification index $1$ and with slope $\l^{'} \le -2$. Moreover  $R_{\l^{'}}(F)(y)= 1+y+y^2$ which is irreducible over $\F_{ \ph} \simeq \F_2$.  Applying Theorem \ref{ore}, we see that $2 \Z_K = \pF_1 \pF_2^2$, with $f(\pF_1/2)= f(\pF_2/2)=2$. Thus there are two prime ideals of $\Z_K$ of degree residue degree $2$ each lying above the rational prime $2$. It follows that for the prime $p=2$, we have  $2=P_2>1=N_2(2)$. By Lemma \ref{comindex},  $2 \mid i(K)$ and so $K$ is not monogenic.  The particular case correspond to $\n_2(a+6)=2$ and $\n_2(1+a+b)=4$.
	
	\item Since  $a \equiv 0 \md 8$ and $b \equiv 3 \md 8$, then $\n_2(ax +1+b)=2$. By (\ref{dev61psi}),  $\nppsi{F} = S$ is only one side of degree $2$ joining the points $(0,2)$, $(1,1)$ and $(2,0)$  with ramification index $1$ and slop $\l = -1$. Moreover, $R_{\l}(F)(y)= \a y^2+ (1+\a)y+1   \in \F_{ \psi}[y]$, where $\a \in  \overline{\F_2}$ such that $\a^2+ \a+1= 0$. Then
	$R_{\l}(F)(y) =\a(y-1)(y-\a^2)$, which is separable in $\F_{\psi}[y]$. Also, by (\ref{dev61phi}),  $\npp{F}=T$ is only one side of degree $2$ with ramification index $1$ joining the points $(0,2)$ and $(0,2)$, with slope $\l^{'}= -1$ and the associated  residual polynomial is $R_{\l^{'}}(F)(y)= 1 +y + y^2$ which is irreducible over $\F_{ \ph} \simeq \F_2$.    By Theorem \ref{ore}, $2\Z_K = \pF_1 \pF_2  \pF_3$ with ramification indices $f(\pF_1/2)= f(\pF_2/2)= f(\pF_3/2)=2$. Then for $p=2$, we have $3=P_2> N_2(2)=1$. By Lemma \ref{comindex}, $2$ is a prime common index divisor of $K$. So, the field $K$ is not monogenic.

	\item Since  $a\equiv 0 \md 9$ and $b \equiv -1 \md 9$, then $F(x) \equiv  (\varphi \cdot \chi)^3 \md 3$, where $\varphi = x-1$ and $\chi = x-2$.  The $\varphi$-adic development of $F(x)$ is 
	 \begin{eqnarray}\label{devphi61b2}
F(x)= \varphi^6+6 \varphi^5+15 \varphi^4+20 \varphi^3 + 15 \varphi^2+(6+a)\varphi+1+a+b \nonumber
	 \end{eqnarray}
and the $\chi$-adic development of $F(x)$	is 
\begin{eqnarray}
F(x)= \chi^6+ 12 \chi^5+ 60 \chi^4+ 160 \chi^3+ 240 \chi^2 +(192+a)\chi+ 2a+b+64 \nonumber
\end{eqnarray}
We have also
$$ \n_3(6+a)= \n_3(192+a) = 1,\, \n_3(1+a+b) \ge 2\, \mbox{and }\, \n_3(2a+b+64)  \ge 2.   $$
 According to the above $\varphi$ and $\chi$-adic developments of the polynomial $F(x)$, both $N_{\varphi}^{+}(F)$   and $N_{\chi}^{+}(F)$  have two sides of degree $1$ each. By Theorem \ref{ore}, we see that $3 \Z_K = \pF_1  \pF_2 ( \pF_3  \pF_4)^2$, where   $f( \pF_k/3)=  1 $   for every $k = 1,2,3,4$. Thus for $p=3$, we have $P_3 = 4 > N_3(1)=3$. It follows by Lemma \ref{comindex}, that $3 \mid i(K)$. Consequently $K$ is not monogenic. 
\end{enumerate}
\end{proof}

\section{exemples}
Let $F(x) \in \Z[x]$ be a monic irreducible polynomial and $K$  a number field generated by a complex  root  of $F(x)$.  
\begin{enumerate}
	 \item  For $F(x) = x^{27} + 810 x + 2$, by Corollary \ref{3^r}, $3 \mid i(K)$ and so $K$ is not monogenic. 
	 \item {	For $F(x)= x^4+8x+8$, this polynomial has  $\Delta(F)= 5 \cdot 2^{12}$. Since  $2^2 \mid 8$, then by\cite[Theorem 1.1]{SK}, we conclude that  $2 \mid (\Z_K:\Z[\th])$, it  follows that  $\Z[\th] $  is not  the ring of integers of $K$. But, by Theorem \ref{mono},  the algebraic integer   $\eta = \frac{\th ^{3}}{4}$  generates a power integral basis of $\Z_K$, and so $K$ is monogenic.}
	 \item For $F(x)= x^{10}+161x+576$, by Theorem \ref{dn2}, $K$ is not monogenic. 

	\item 	For $F(x) = x^{18}+ax+b$, $a \equiv 9 \,\, \mbox{or}\,\, 18 \md{27}$ and $b\equiv 26 \md{27}$. By Theorem \ref{dn1}(2), $K$ is not monogenic.
	 \item For $F(x) = x^{7}+80x+54$, as $F(x)$ is $2$-Eisenstein polynomial, then it  is irreducible over $\Q$. Since $80 \equiv -1 \mod {27}$ and $54 \equiv 0 \mod{27}$. By Corollary \ref{corn12}(7), the the sepstic field $K$ is not monogenic. 
	 \item For $F(x) = x^6 + 270 x + 26$, then $F(x)$ is $2$-Eisenstein, then it is irreducible over $\Q$. By Theorem \ref{d61}(4), the sextic field $K$ is not monogenic.  
	 \item For $F(x)= x^{4 \cdot 5^r} + 775  x + 124$ with $r\ge 2$, as for $p = 31$ and $\ph = x$, $\npp{F} = S $ has a single side with slope $\l= \frac{-1}{4 \cdot 5^r}$. So $R_{\l}(F)(y)$ is irreducible over $\F_{ \ph} \simeq \F_{ 31}$ as it is of degree $1$, and so $F(x)$ is irreducible over $\Q$. As $x^4-1 \equiv (x-1)(x-2)(x-3)(x-4) \md 5$, then $N_5(1,4,4) = 4$.  Sine $\n_5(775) = 2$, $\n_5(124^4-1) \ge 3$ and $r \ge 2$.  By Theorem \ref{dn1}(2), $5  \mid i(K)$ and so $K$ is not monogenic. 
 \end{enumerate}

\end{document}